\documentclass[10pt, reqno]{amsart}
\usepackage{amsmath, amsthm, amscd, amsfonts, amssymb ,graphicx, color}
\usepackage[bookmarksnumbered, colorlinks, plainpages]{hyperref}
\hypersetup{colorlinks=true,linkcolor=red, anchorcolor=green, citecolor=cyan, urlcolor=red, filecolor=magenta, pdftoolbar=true}
\usepackage{romannum}
\usepackage{chngcntr}
\counterwithout{equation}{section}
\textheight 22.5truecm \textwidth 15.5truecm
\setlength{\oddsidemargin}{0.35in}\setlength{\evensidemargin}{0.35in}

\setlength{\topmargin}{-.5cm}

\newtheorem{thm}{Theorem}[section]
\newtheorem{lem}[thm]{Lemma}

\newtheorem{cor}[thm]{Corollary}
\theoremstyle{definition}

\newtheorem{ex}[thm]{Example}

\theoremstyle{remark}

\numberwithin{equation}{section}
\newcommand \0{\textbf{0}}
\newcommand \s{^{[*]}}
\newcommand \+{^{[\dag]}}

\begin{document}
	\setcounter{page}{1}
	\pagenumbering{arabic}
	
	\title{ Reverse Order Law for Generalized Inverses with Indefinite Hermitian Weights}
	\author[ ]  {K. Kamaraj  $^{1}$, P. Sam Johnson $^{2}$ and Athira Satheesh K $^{3}$ }
	
	\address{$^{1}$Department of Mathematics, University College of Engineering Arni, Thatchur, Arni-632326.
		India.}
	\email{\textcolor[rgb]{0.00,0.00,0.84}{krajkj@yahoo.com}}
	
	\address{$^{2}$Department of Mathematical and Computational Sciences,
		National Institute of Technology Karnataka, Surathkal, Karnataka - 575 025, India.}
	\email{\textcolor[rgb]{0.00,0.00,0.84}{ 
			sam@nitk.edu.in}}

	\address{$^{3}$Department of Mathematical and Computational Sciences,
		National Institute of Technology Karnataka, Surathkal, Karnataka - 575 025, India.}
	\email{\textcolor[rgb]{0.00,0.00,0.84}{athirachandri@gmail.com}}

	\keywords{Moore-Penrose inverse, reverse order law, indefinite inner product space, weighted generalized
		inverse}
	
	\date{\today}

	\begin{abstract}
		{In this paper, necessary and sufficient conditions are given for the existence of Moore-Penrose inverse of a product of two matrices in an indefinite inner product space (IIPS) in which reverse order law holds good. Rank equivalence formulas with respect to IIPS are provided and an open problem is given at the end.}
	\end{abstract}
	\maketitle
	\section{Introduction}
	
	The reverse order law for generalized inverse plays an important role in the 
	theoretic research and numerical computations in many areas, including the 
	singular matrix problems, ill-posed problems, optimization problems, and statistics problems (see, for instance, \cite{benisraelbook, golub-book, gohberg-book, radoj, raobook, sun-book, werner}).  A classical result of Greville \cite{Greville} gives necessary and sufficient conditions for the two term reverse order law for the Moore-Penrose inverse in the Euclidean space. It is known that the reverse order law does not hold for various classes of generalized inverses \cite{Cao,Wang}.  Hence, a significant number of papers treat the sufficient or equivalent conditions such that the reverse order law holds in some sense.  Sun and Wei  established some sufficient and necessary conditions for inverse order rule for weighted generalized inverses with positive definite weights \cite{sunwei, sunweitriple}. The concept of the Moore-Penrose inverse between indefinite inner product spaces has been introduced  and mentioned in \cite{kkkcsmpinv} that if the weights are positive definite, then the weighted generalized inverse and the Moore-Penrose inverse between indefinite inner product spaces are the same. In this paper, we give some necessary and sufficient conditions for the existence of Moore-Penrose inverse of a product of two matrices and to hold reverse order law  in an IIPS. Also, we claim that our results are more general than the existing ones for weighted Moore-Penrose inverse.

	\section{Preliminaries}
	
	We consider matrices on the field $ \mathbb{C}$ of complex numbers and denote  the space of complex matrices of order $ m\times n $ by $ \mathbb{C}^{m\times n}$.  The \textit{range} and the \textit{rank} of $ A\in\mathbb{C}^{m\times n}$ are denoted by $R(A)$ and $ rank(A)$ respectively. The \textit{index} of $A\in \mathbb{C}^{n\times n}$  is the least positive integer $p$ such that \emph{rank}$(A^p)= \emph{rank} (A^{p+1})$ and it is denoted by $ind(A)$.
	
	For a complex square matrix $ A $, we call it  \textit{Hermitian} if $ A=A^* $, where $ A^* $ denotes the adjoint of $A$ with respect to the Hermitian inner product $\langle\centerdot,\centerdot\rangle$ on $\mathbb{C}^n$ (i.e., complex conjugate transpose). Let $N$ be an invertible Hermitian matrix of order $n$.  An \textit{indefinite inner product} in $\mathbb{C}^n$ is defined by an equation $$[x,y]=\langle x,Ny\rangle$$ where $x,y \in \mathbb{C}^n$. Such a matrix $N$ is called a \textit{weight}. A space with an indefinite inner product is called an \textit{indefinite inner product space (IIPS)}. Let $ M $ and $ N $ be weights of order $ m $ and $ n $, respectively. The \textit{$MN$-adjoint} of an $m\times n$ matrix $A$ denoted $A^{[*]}$ is defined by $$A^{[*]}= N^{-1}A^*M.$$
	Sun and Wei  \cite{sunwei} used the terminology \textit{weighted conjugate transpose} for \textit{$ MN$-adjoint}. In an IIPS, by considering the same weights $ M=N $, a complex square matrix $A$ is called \textit{$N$-Hermitian} if $A^{[*]}=A$; it is called  \textit{$N$-range Hermitian}  if $R(A)=R(A^{[*]})$.  If the IIPS is understood from the context, then instead of
	saying that $A$ is $N$-Hermitian ($N$-range Hermitian), we may simply say that $A$ is Hermitian (range Hermitian).

	The $MN$-\textit{Moore-Penrose inverse $A^{[\dag ]}$}   of $A\in\mathbb{C}^{m\times n}$ between IIPSs is defined to be the \textit{unique} solution $X\in\mathbb{C}^{n\times m}$, if it exists, to the equations
	\begin{equation} \label{eqn1}
	AXA=A
	\end{equation}
	\begin{equation} \label{eqn2}
	XAX=X
	\end{equation}
	\begin{equation} \label{eqn3}
	(AX)^{[*]}=AX
	\end{equation}
	\begin{equation} \label{eqn4}
	(XA)^{[*]}=XA.
	\end{equation}
	
	The reference to $MN$ will be dropped when there is no ambiguity and $ A^{[\dagger]}$ will be simply called the Moore-Penrose inverse of $ A $.  If $A$ is invertible, then $A^{[\dag ]} = A^{-1}$.  It is easy to observe that if $M$ and $N$ are the identity matrices, then $A\+ = A^\dagger$, where $A^\dagger$ denotes the usual Moore-Penrose inverse in an Euclidean space. Sun and Wei used the notation $A^{\dag}_{MN}$ for $A^{[\dag ]}$  to emphasize on the weights of positive definite Hermite matrices $ M $ and $ N $. In this case, $A^{\dag}_{MN}$ exists for all matrices $A$ and $A^{\dag}_{MN}=N^{-\frac{1}{2}}(M^{\frac{1}{2}}AN^{-\frac{1}{2}})^\dagger M^{\frac{1}{2}}$ \cite{sunwei}. Unlike the Euclidean case and weighted Moore-Penrose inverse, a matrix need not have a Moore-Penrose inverse between IIPSs \cite{kkkcsmpinv}. The following result gives a necessary and sufficient condition for the existence of Moore-Penrose inverse of a matrix between IIPSs.
	
	\begin{thm}[\cite{kkkcsmpinv}, Theorem 1]
		Let $A\in\mathbb{C}^{m\times n}$. Then $A^{[\dag]}$ exists
		iff $rank(A)=rank(AA^{[*]})=rank(A^{[*]}A)$.
	\end{thm}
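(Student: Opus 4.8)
The plan is to prove the two implications separately, with the workhorse throughout being the multiplicativity of the adjoint, $(PQ)^{[*]}=Q^{[*]}P^{[*]}$, which is immediate from $A^{[*]}=N^{-1}A^*M$ together with the invertibility of the Hermitian weights $M,N$. For the forward implication, assume $X=A^{[\dagger]}$ solves \eqref{eqn1}--\eqref{eqn4}. Reading \eqref{eqn4} as $XA=(XA)^{[*]}=A^{[*]}X^{[*]}$ and substituting into \eqref{eqn1} gives $A=AXA=A(A^{[*]}X^{[*]})=(AA^{[*]})X^{[*]}$, so that $rank(A)\le rank(AA^{[*]})\le rank(A)$ and hence $rank(AA^{[*]})=rank(A)$. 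Symmetrically, \eqref{eqn3} reads $AX=X^{[*]}A^{[*]}$, and then $A=AXA=(X^{[*]}A^{[*]})A=X^{[*]}(A^{[*]}A)$ forces $rank(A^{[*]}A)=rank(A)$. This half is short and purely rank-theoretic.

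For the converse I would pass to a full-rank factorization $A=FG$ with $F\in\mathbb{C}^{m\times r}$, $G\in\mathbb{C}^{r\times n}$ and $rank(F)=rank(G)=r=rank(A)$, so that $A^{[*]}=N^{-1}G^*F^*M$. Since $F$ has full column rank, $G$ has full row rank, and $M,N$ are invertible, cancelling the outer factors yields the rank identities $rank(A^{[*]}A)=rank(F^*MF)$ and $rank(AA^{[*]})=rank(GN^{-1}G^*)$. Thus the two hypotheses $rank(AA^{[*]})=rank(A^{[*]}A)=r$ say exactly that the $r\times r$ Hermitian matrices $F^*MF$ and $GN^{-1}G^*$ are invertible. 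I would then exhibit the candidate
\[
X=N^{-1}G^*(GN^{-1}G^*)^{-1}(F^*MF)^{-1}F^*M,
\]
compute $AX=F(F^*MF)^{-1}F^*M$ and $XA=N^{-1}G^*(GN^{-1}G^*)^{-1}G$, and verify the four axioms directly: \eqref{eqn1} and \eqref{eqn2} collapse by telescoping the invertible inner blocks $F^*MF$ and $GN^{-1}G^*$, while \eqref{eqn3}--\eqref{eqn4} follow from the Hermitian symmetry of those blocks, which gives $M^{-1}(AX)^*M=AX$ and $N^{-1}(XA)^*N=XA$.

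The crux of the whole statement is this converse, and within it the single decisive step is recognizing that the rank hypotheses are equivalent to the invertibility of the compressed weights $F^*MF$ and $GN^{-1}G^*$; once this is secured the formula for $X$ is essentially forced (geometrically, $AX$ becomes the $M$-selfadjoint projector onto $R(A)$ and $XA$ the $N$-selfadjoint projector onto $R(A^{[*]})$), and the remaining verification is mechanical. The only place that demands genuine care is keeping the two weights straight: the adjoint $(\cdot)^{[*]}$ applied to the $m\times m$ factor $AX$ is taken with $M$, whereas on the $n\times n$ factor $XA$ it is taken with $N$. This bookkeeping is precisely what separates the indefinite two-weight setting from the classical single-inner-product case, where $M=N=I$ and the rank conditions are automatic.
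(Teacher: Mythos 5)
The paper never proves this statement: it is quoted verbatim as a preliminary result from the reference \cite{kkkcsmpinv}, so there is no in-paper argument to measure yours against; what can be assessed is correctness, and your proof is correct. The forward direction (rewrite \eqref{eqn3} and \eqref{eqn4} via the anti-multiplicativity $(PQ)\s=Q\s P\s$, substitute into \eqref{eqn1}, and sandwich the ranks) is sound, and the weight bookkeeping checks out: for $X\in\mathbb{C}^{n\times m}$ one has $X\s=M^{-1}X^*N$, so indeed $(XA)\s=A\s X\s$ and $(AX)\s=X\s A\s$. The converse via a full-rank factorization $A=FG$ is also sound: since $F$ and $G^*$ have full column rank and $G$, $F^*$ full row rank, the reductions $rank(A\s A)=rank(F^*MF)$ and $rank(AA\s)=rank(GN^{-1}G^*)$ are valid, the rank hypotheses are exactly invertibility of those two $r\times r$ Hermitian blocks, and your candidate $X=N^{-1}G^*(GN^{-1}G^*)^{-1}(F^*MF)^{-1}F^*M$ does satisfy \eqref{eqn1}--\eqref{eqn4} by telescoping and by the Hermitian symmetry of $F^*MF$, $GN^{-1}G^*$, $M$, $N$. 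One small point you should add explicitly: the paper defines $A\+$ as the \emph{unique} solution of the four equations, so exhibiting one solution establishes existence only after noting that solutions are automatically unique; the classical Penrose uniqueness argument transfers verbatim, since it uses nothing beyond the four equations and the fact that $[*]$ is an involutive anti-automorphism (no positive definiteness of the weights is needed). As a bonus, your construction gives more than the cited statement: an explicit closed form for $A\+$, from which properties such as Theorem \ref{property}(iv) can be read off directly.
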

	
	For the sake of clarity as well as for easier reference we mention the following properties of Moore-Penrose inverse between IIPSs.
	
	\begin{thm}[\cite{kkkcsmpinv}, Section 4] \label{property}
		Let $A\in \mathbb{C} ^{m\times n}$ be such that $A\+$ exists. Then the following statements hold :
		\begin{enumerate}
			\item[(i)] $A\s=A\s AA\+ =A\+AA\s$.
			\item[(ii)] $(A^{[*]})^{[\dag ]} = (A^{[\dag ]} )^{[*]}$. \label{theorem2.2}
			\item[(iii)] $(AA^{[*]})^{[\dag ]}$ and $(A^{[*]}A)^{[\dag ]}$ exist. In this
			case, $(AA^{[*]})^{[\dag ]} =(A^{[*]})^{[\dag ]}A^{[\dag ]}$ and
			$(A^{[*]}A)^{[\dag ]} =A^{[\dag ]}(A^{[*]})^{[\dag ]}$.
			\item[(iv)] $A^{[\dag ]} = A^{[*]}(AA^{[*]})^{[\dag ]} =(A^{[*]}A)^{[\dag ]} A^{[*]}$.
			\item[(v)] $(AA\s)\+ (AA\s)A=A=(AA\s)(AA\s)\+ A$.
			\item[(vi)] $(AA\s)\+ (AA\s) = (AA\s)(AA\s)\+$.
		\end{enumerate}
	\end{thm}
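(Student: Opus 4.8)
The plan is to reduce the whole list to two structural facts about the $MN$-adjoint: it is an involution, $(A^{[*]})^{[*]}=A$, and it reverses products, $(AB)^{[*]}=B^{[*]}A^{[*]}$. Both are immediate from $A^{[*]}=N^{-1}A^*M$ together with the invertibility and Hermitian symmetry of $M$ and $N$. The second tool is uniqueness: by definition $A^{[\dag]}$ is \emph{the} solution of \eqref{eqn1}--\eqref{eqn4}, so to identify any explicit matrix as a Moore-Penrose inverse it suffices to verify those four equations for it.

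For (i) I would take the $MN$-adjoint of \eqref{eqn1}, getting $A^{[*]}(A^{[\dag]})^{[*]}A^{[*]}=A^{[*]}$. Grouping the first two factors and using \eqref{eqn4} in the form $A^{[*]}(A^{[\dag]})^{[*]}=(A^{[\dag]}A)^{[*]}=A^{[\dag]}A$ gives $A^{[\dag]}AA^{[*]}=A^{[*]}$; grouping the last two factors and using \eqref{eqn3} in the form $(A^{[\dag]})^{[*]}A^{[*]}=(AA^{[\dag]})^{[*]}=AA^{[\dag]}$ gives $A^{[*]}AA^{[\dag]}=A^{[*]}$. For (ii) I would check that $Y=(A^{[\dag]})^{[*]}$ solves the four equations for $A^{[*]}$: the first two are just the adjoints of \eqref{eqn1} and \eqref{eqn2}, while $A^{[*]}Y=(A^{[\dag]}A)^{[*]}$ and $YA^{[*]}=(AA^{[\dag]})^{[*]}$ are Hermitian by \eqref{eqn4} and \eqref{eqn3}; uniqueness then yields $(A^{[*]})^{[\dag]}=(A^{[\dag]})^{[*]}$.

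Property (iii) is the heart of the argument and, I expect, the main obstacle, because it requires exhibiting the candidate inverse explicitly and verifying all four equations while carefully tracking whether each Hermitian condition lives on the $M$-side or the $N$-side. Writing $B=AA^{[*]}$ (which is $M$-Hermitian) and, via (ii), $X=(A^{[*]})^{[\dag]}A^{[\dag]}=(A^{[\dag]})^{[*]}A^{[\dag]}$, I would repeatedly collapse the central factor using $A^{[*]}(A^{[\dag]})^{[*]}=A^{[\dag]}A$ (from the proof of (i)) together with \eqref{eqn1} and \eqref{eqn2}. A convenient by-product is that both $BX$ and $XB$ reduce to $AA^{[\dag]}$, which is $M$-Hermitian by \eqref{eqn3}; this disposes of the two Hermitian equations at once. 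Since $X$ thus satisfies all four equations, $(AA^{[*]})^{[\dag]}$ exists and, by uniqueness, equals the displayed formula. The statement for $A^{[*]}A$ is proved symmetrically.

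The remaining items fall out of (iii). Substituting its formula and collapsing with (i) and \eqref{eqn2} gives $A^{[*]}(AA^{[*]})^{[\dag]}=A^{[\dag]}$ and, symmetrically, $(A^{[*]}A)^{[\dag]}A^{[*]}=A^{[\dag]}$, which is (iv). Right-multiplying the by-product $BX=XB=AA^{[\dag]}$ by $A$ and applying \eqref{eqn1} gives (v), and (vi) is immediate since both sides equal $AA^{[\dag]}$.
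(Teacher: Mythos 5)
Your proof is correct, but there is nothing in the paper to compare it against: Theorem \ref{property} is stated without proof, imported verbatim from the cited reference of Kamaraj and Sivakumar ``for the sake of clarity as well as for easier reference.'' Judged on its own merits, your plan is sound and complete. The two structural facts you isolate, $(A\s)\s=A$ and $(AB)\s=B\s A\s$, do follow immediately from $A\s=N^{-1}A^{*}M$ with $M,N$ Hermitian and invertible, and together with the uniqueness built into the definition of $A\+$ they carry everything. Concretely: taking the adjoint of \eqref{eqn1} and grouping via \eqref{eqn3} and \eqref{eqn4} gives (i); your candidate $Y=(A\+)\s$ visibly satisfies the four equations for $A\s$, giving (ii); and for (iii) the computation $BX=XB=AA\+$ with $B=AA\s$ and $X=(A\+)\s A\+$ disposes of both Hermitian conditions at once, after which \eqref{eqn1} and \eqref{eqn2} collapse using $AA\+A=A$ and $A\+AA\+=A\+$. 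Exhibiting this solution is exactly what existence means under the paper's definition, so the existence assertion in (iii) is legitimately covered, and (iv)--(vi) follow by substitution as you describe. This is the standard Penrose-style verification argument and is presumably the same route taken in the cited source; the one point worth making fully explicit in a write-up is the weight bookkeeping you allude to, namely that $AA\+$ and $AA\s$ are Hermitian with respect to $M$ while $A\+A$ and $A\s A$ are Hermitian with respect to $N$, so that each of the conditions \eqref{eqn3} and \eqref{eqn4} is being checked in the correct space.
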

	
	This section is ended with some known results which will be used in the sequel.
	
	\begin{lem}[\cite{benisraelbook}, p.173]\label{rankthm}
		Let $A$ be a square matrix of order $n$ with $ind(A)=1$. Let $B\in\mathbb{C}^{n\times \ell}$ be a matrix such that $R(AB)\subseteq R(B)$. Then 
		\begin{eqnarray*}
			R(AB)=R(A)\cap R(B).
		\end{eqnarray*}
	\end{lem}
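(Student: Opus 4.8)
The plan is to prove the two inclusions separately, the forward one being immediate and the reverse one carrying all the weight. First I would observe that $R(AB)\subseteq R(A)$ holds for any pair of matrices, while $R(AB)\subseteq R(B)$ is exactly the standing hypothesis; together these give $R(AB)\subseteq R(A)\cap R(B)$ with no work. The real content is the opposite inclusion $R(A)\cap R(B)\subseteq R(AB)$.

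For that inclusion, the first move is to reinterpret the hypothesis. Since $R(AB)=A\,R(B)$, the assumption $R(AB)\subseteq R(B)$ says precisely that $R(B)$ is an $A$-invariant subspace. Next I would extract the structural consequence of $ind(A)=1$: from $rank(A)=rank(A^2)$ one gets $R(A)=R(A^2)$ and $N(A)=N(A^2)$, and a short argument shows $R(A)\cap N(A)=\{0\}$ (if $x=Az$ and $Ax=0$, then $A^2z=0$, so $z\in N(A^2)=N(A)$, whence $x=Az=0$). In particular $A$ is injective on $R(A)$.

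The key step is then to analyze the action of $A$ on the subspace $S:=R(A)\cap R(B)$. Because $A\,R(A)\subseteq R(A)$ (always) and $A\,R(B)\subseteq R(B)$ (the $A$-invariance just noted), we have $A\,S\subseteq S$, so $A$ restricts to an endomorphism of the finite-dimensional space $S$. Since $A$ is injective on $R(A)\supseteq S$, this restriction is injective, hence surjective, giving $A\,S=S$. Consequently any $y\in S=R(A)\cap R(B)$ has a preimage $w\in S\subseteq R(B)$ with $Aw=y$, which exhibits $y=Aw\in A\,R(B)=R(AB)$. This yields $R(A)\cap R(B)\subseteq R(AB)$ and completes the equality.

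The step I expect to be the main obstacle is showing that $A$ maps $S=R(A)\cap R(B)$ \emph{onto} itself, since this is where the two hypotheses must be combined: the $A$-invariance of $R(B)$ (coming from $R(AB)\subseteq R(B)$) keeps the image inside $R(B)$, while $ind(A)=1$ supplies the injectivity of $A$ on $R(A)$ that upgrades ``into'' to ``onto'' in finite dimensions. An alternative route to the same conclusion uses the group inverse $A^{\#}$, which exists exactly because $ind(A)=1$: as $A^{\#}$ is a polynomial in $A$, the invariant subspace $R(B)$ is also $A^{\#}$-invariant, so for $y\in R(A)\cap R(B)$ the vector $w=A^{\#}y$ lies in $R(B)$ and satisfies $Aw=AA^{\#}y=y$, once more placing $y$ in $R(AB)$.
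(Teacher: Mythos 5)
Your proof is correct, but note that the paper itself offers no proof to compare against: the lemma is quoted verbatim from Ben-Israel and Greville (p.~173) and used as a black box, so your argument is genuinely supplying what the paper omits. Both of your routes are sound. The easy inclusion $R(AB)\subseteq R(A)\cap R(B)$ is exactly as you say. For the reverse inclusion, your main argument is clean: $ind(A)=1$ gives $R(A)\cap N(A)=\{0\}$, hence $A$ is injective on $R(A)$; the hypothesis $R(AB)\subseteq R(B)$ makes $S=R(A)\cap R(B)$ an $A$-invariant subspace; and injectivity of an endomorphism of a finite-dimensional space forces $AS=S$, so every element of $S$ is $A$ of something in $R(B)$, i.e.\ lies in $R(AB)$. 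Your alternative via the group inverse $A^{\#}$ is also valid (the group inverse exists precisely when $ind(A)=1$ and is a polynomial in $A$, so $A^{\#}$ preserves every $A$-invariant subspace, and $AA^{\#}y=y$ for $y\in R(A)$); this second route is essentially the argument one finds in the cited textbook, while your first route is more elementary in that it never needs the existence or polynomial representation of $A^{\#}$, only the rank condition $rank(A)=rank(A^2)$. The only stylistic caution is that in the first argument you should make explicit (as you implicitly do) that injective implies surjective is being applied to the restriction $A|_S\colon S\to S$, which is legitimate only because you first verified $AS\subseteq S$; with that noted, the proof is complete.
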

	
	\begin{lem}[\cite{sunweitriple}, Lemma 2.1]\label{prop}
		Let $A$, $B$, $C$ and $D$  be matrices with suitable orders. Then
		\begin{eqnarray*}
			rank\left(\begin{array}{cc} A & AB \\ CA & D
			\end{array}\right) =rank(A)+ rank(D-CAB).
		\end{eqnarray*}
	\end{lem}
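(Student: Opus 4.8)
The plan is to prove this identity entirely by elementary block operations, exploiting two standard facts: multiplying a matrix on the left or right by an invertible matrix leaves its rank unchanged, and the rank of a block-diagonal matrix is the sum of the ranks of its diagonal blocks. The structural feature that makes everything work is that the off-diagonal blocks are not arbitrary; they are $AB$ and $CA$, each carrying a factor of $A$. This means they can be annihilated using the accompanying factors $B$ and $C$, with no need to invert $A$ (which in general is not even square). First I would fix the dimensions so that the block operations are legitimate: writing $A\in\mathbb{C}^{m\times n}$, compatibility of the products forces $B\in\mathbb{C}^{n\times q}$, $C\in\mathbb{C}^{p\times m}$, and $D\in\mathbb{C}^{p\times q}$, so that $D-CAB\in\mathbb{C}^{p\times q}$ is well defined and the displayed matrix is of size $(m+p)\times(n+q)$.

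Next I would clear the top-right block by right-multiplying with the unit upper block-triangular matrix $\begin{pmatrix} I & -B \\ 0 & I \end{pmatrix}$ of size $(n+q)\times(n+q)$, which yields
\[
\begin{pmatrix} A & AB \\ CA & D \end{pmatrix}\begin{pmatrix} I & -B \\ 0 & I \end{pmatrix}=\begin{pmatrix} A & 0 \\ CA & D-CAB \end{pmatrix}.
\]
Then I would clear the bottom-left block by left-multiplying with the unit lower block-triangular matrix $\begin{pmatrix} I & 0 \\ -C & I \end{pmatrix}$ of size $(m+p)\times(m+p)$, giving
\[
\begin{pmatrix} I & 0 \\ -C & I \end{pmatrix}\begin{pmatrix} A & 0 \\ CA & D-CAB \end{pmatrix}=\begin{pmatrix} A & 0 \\ 0 & D-CAB \end{pmatrix}.
\]
Both multipliers are unipotent triangular, hence invertible, so the rank is preserved at each step, and the rank of the final block-diagonal matrix is visibly $rank(A)+rank(D-CAB)$, which is the claim.

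Honestly, there is no deep obstacle here. The only points requiring care are getting the \emph{form} and \emph{order} of the two elementary operations right (one from the right to kill $AB$, one from the left to kill $CA$) and confirming that the two block multipliers are square of the correct size and invertible. The sole genuine insight is recognizing that the shared $A$-factor in the off-diagonal blocks is precisely what permits this simultaneous reduction; once that is seen, the computation is routine.
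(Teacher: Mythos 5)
Your proof is correct: the block multipliers are unipotent and hence invertible, the two products you compute are right, and the rank of the resulting block-diagonal matrix is indeed $rank(A)+rank(D-CAB)$. The paper itself states this lemma without proof (it is quoted from Sun and Wei's paper), and your block Gaussian elimination is exactly the standard argument by which that cited result is established, so there is nothing to add.
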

	
	\begin{lem} [\cite{tian2001}, Theorem 2.7] \label{rankpq}
		Let $P$ and $Q$ are two idempotent matrices of suitable orders. Then
		\begin{eqnarray*}
			rank(PQ-QP)&=&rank\left(\begin{array}{cc}P\\Q\end{array}\right)+rank\left(\begin{array}{cc} P& Q\end{array}\right)+rank(PQ)\\
			&&+rank(QP)-2rank(P)-2rank(Q).	
		\end{eqnarray*}
	\end{lem}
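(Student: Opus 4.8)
The plan is to prove the identity by reducing every rank on the right-hand side to combinations of dimensions of intersections of the four canonical subspaces attached to $P$ and $Q$, namely $R(P),R(Q),R(P^*),R(Q^*)$, and then to check that these contributions cancel. The two tools I will lean on throughout both come from idempotency together with a single block-elementary operation: for any conformable $M$,
\[
rank(M(I-P)) = rank(M) - \dim\big(R(M^*)\cap R(P^*)\big), \qquad rank((I-P)M) = rank(M) - \dim\big(R(M)\cap R(P)\big).
\]
Each is immediate once one reads multiplication by $I-P$ as a projection and identifies its kernel on the row space (resp.\ range) of $M$, using $w(I-P)=0\iff w\in R(P^*)$; alternatively both fall out of Lemma~\ref{prop}.

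First I would dispose of the two block ranks. Subtracting the $P$-column components of $Q$ gives $rank[P\ \ Q]=rank[P\ \ (I-P)Q]=rank(P)+rank(Q-PQ)$, the last step by complementarity of $R(P)$ and $R(I-P)$; the analogous row operation gives $rank\begin{bmatrix}P\\ Q\end{bmatrix}=rank(P)+rank(Q-QP)$. Substituting these and cancelling $2\,rank(P)$ turns the asserted formula into the equivalent statement
\[
rank(PQ-QP) = rank(Q-QP) + rank(Q-PQ) + rank(PQ) + rank(QP) - 2\,rank(Q).
\]

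The heart of the argument is a clean splitting of the commutator. Writing $X=PQ-QP$ and inserting $I=P+(I-P)$ on both sides, the two ``diagonal'' blocks vanish because $PXP=PQP-PQP=0$ and $(I-P)X(I-P)=0$, leaving $X=PX(I-P)+(I-P)XP=PQ(I-P)-(I-P)QP$. The first summand has range inside $R(P)$ and row space inside $R\big((I-P)^*\big)$, while the second has range inside $R(I-P)$ and row space inside $R(P^*)$; since $R(P)\cap R(I-P)=\{0\}$ and $R(P^*)\cap R((I-P)^*)=\{0\}$, the ranges of the two summands, and likewise their row spaces, meet only in $0$, so the ranks add exactly:
\[
rank(PQ-QP) = rank\big(PQ(I-P)\big) + rank\big((I-P)QP\big).
\]
Now the two rank-drop formulas evaluate each term here, and the same formulas evaluate $rank(Q-QP)$ and $rank(Q-PQ)$; after substitution the target identity collapses to the single intersection identity
\[
\dim\big(R(P)\cap R(Q)\big) = \dim\big(R(P)\cap R(QP)\big)
\]
together with its adjoint version obtained by replacing $(P,Q)$ with $(P^*,Q^*)$.

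I expect this last identity to be the only real obstacle, and the place where idempotency of \emph{both} factors is essential. It should follow from the set equality $R(P)\cap R(Q)=R(P)\cap R(QP)$: the inclusion $\supseteq$ is clear from $R(QP)\subseteq R(Q)$, while for $\subseteq$ one takes $y\in R(P)\cap R(Q)$, uses $Qy=y$ (as $y\in R(Q)$ with $Q$ idempotent) and $y=Pa$, and concludes $y=Qy=QPa\in R(QP)$. Applying the same reasoning to $P^*,Q^*$ (again idempotent, with $Q^*P^*=(PQ)^*$) yields the adjoint identity, after which the bookkeeping closes. The genuine risk is keeping the four intersection terms correctly paired through the three substitutions; organizing the whole computation around the symmetric splitting of $X$ above is what keeps it manageable.
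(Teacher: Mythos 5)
Your proposal is correct, but there is no internal proof to compare it against: the paper does not prove Lemma \ref{rankpq} at all, it imports the statement verbatim from \cite{tian2001} (Theorem 2.7) and uses it as a black box. Your argument therefore supplies a self-contained proof where the authors rely on a citation, and it is genuinely different in method from the cited source: Tian and Styan derive such formulas by block elementary operations on partitioned matrices (the same toolkit that appears in Lemma \ref{prop} and Lemma \ref{propwithMP}), whereas you argue geometrically through subspace dimensions. The load-bearing steps all check out: the reduction of the two block ranks to $rank(P)+rank(Q-PQ)$ and $rank(P)+rank(Q-QP)$, using $R(P)\cap R(I-P)=\{0\}$ for an idempotent $P$; the splitting $PQ-QP=PQ(I-P)-(I-P)QP$ after the diagonal pieces $PXP$ and $(I-P)X(I-P)$ are seen to vanish; the additivity of the two ranks because both the ranges and the row spaces of the summands meet trivially; the evaluation $rank((I-P)M)=rank(M)-\dim\left(R(P)\cap R(M)\right)$ via $\ker(I-P)=R(P)$; and the set identity $R(P)\cap R(Q)=R(P)\cap R(QP)$ together with its adjoint version, which is precisely where idempotency of \emph{both} matrices enters. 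Two minor points would need to be made explicit in a polished write-up. First, the rank-additivity criterion you invoke (trivial intersection of ranges and of row spaces implies $rank(A+B)=rank(A)+rank(B)$) is classical but not immediate: one shows $\ker(A+B)=\ker A\cap \ker B$ from the range condition, and then that $\ker A+\ker B$ is the whole space from the row-space condition. Second, the aside that your two rank-drop formulas "fall out of Lemma \ref{prop}" is dubious -- Lemma \ref{prop} produces Schur-complement ranks, not intersection dimensions -- but the kernel argument you give first is the correct justification, so nothing is lost.
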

	
	\begin{lem}[\cite{CARLSON}, Corollary] \label{blockmatrix} 
		Let $ M= \left(\begin{array}{cc}A & B \\ C & D \end{array}\right).$ Then $rank(M)=rank(A)$ if and only if $D-CA\+B=0,$ $N(A)\subseteq N(C)$ and $N((A)^*)\subseteq N(B)^*.$
		
		\end{lem}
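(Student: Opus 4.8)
The plan is to bring $M$ to a block-diagonal form by invertible block operations and then read off its rank. Since $A\+$ satisfies the defining relations \eqref{eqn1} and \eqref{eqn2}, I would multiply $M$ on the left by the invertible block-unitriangular matrix $\left(\begin{smallmatrix} I & 0 \\ -CA\+ & I \end{smallmatrix}\right)$ and on the right by $\left(\begin{smallmatrix} I & -A\+B \\ 0 & I \end{smallmatrix}\right)$. Using $AA\+A=A$ and $A\+AA\+=A\+$ to simplify the corner entry, a direct computation yields
$$\left(\begin{array}{cc} I & 0 \\ -CA\+ & I \end{array}\right) M \left(\begin{array}{cc} I & -A\+B \\ 0 & I \end{array}\right)=\left(\begin{array}{cc} A & (I-AA\+)B \\ C(I-A\+A) & D-CA\+B \end{array}\right).$$
Because the two outer factors are invertible, the right-hand side has the same rank as $M$, and this single identity drives both directions of the equivalence.

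Next I would translate the three hypotheses into the vanishing of the non-$A$ blocks on the right. From \eqref{eqn1} the matrix $AA\+$ is idempotent with $R(AA\+)=R(A)$, so $(I-AA\+)B=0$ is equivalent to $R(B)\subseteq R(A)$, which upon taking orthogonal complements in the standard inner product is equivalent to $N(A^*)\subseteq N(B^*)$. Similarly $A\+A$ is idempotent with $N(A\+A)=N(A)$, so $C(I-A\+A)=0$ is equivalent to $N(A)\subseteq N(C)$. These two equivalences are elementary but form the technical core.

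For the backward implication, if all three conditions hold then the displayed matrix reduces to $\left(\begin{smallmatrix} A & 0 \\ 0 & 0 \end{smallmatrix}\right)$, whence $rank(M)=rank(A)$. For the forward implication, I would use that deleting rows or columns cannot increase rank: $rank(M)\ge rank\left(\begin{smallmatrix} A & B \end{smallmatrix}\right)\ge rank(A)$ and $rank(M)\ge rank\left(\begin{smallmatrix} A \\ C \end{smallmatrix}\right)\ge rank(A)$. Hence $rank(M)=rank(A)$ forces $rank\left(\begin{smallmatrix} A & B \end{smallmatrix}\right)=rank(A)$ and $rank\left(\begin{smallmatrix} A \\ C \end{smallmatrix}\right)=rank(A)$, i.e. $R(B)\subseteq R(A)$ and $R(C^*)\subseteq R(A^*)$, which are the first two conditions. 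With these in hand the displayed matrix is block diagonal, so $rank(M)=rank(A)+rank(D-CA\+B)$; comparing with $rank(M)=rank(A)$ then forces $D-CA\+B=0$.

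The step I expect to be most delicate is the bookkeeping in the $(2,2)$ entry of the product: verifying that it collapses exactly to $D-CA\+B$ relies on the second Penrose equation $A\+AA\+=A\+$, and the whole scheme presupposes that the idempotency and null-space identities for $AA\+$ and $A\+A$ are available, i.e. that $A\+$ genuinely satisfies \eqref{eqn1} and \eqref{eqn2} rather than being an arbitrary inner inverse.
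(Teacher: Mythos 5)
Your proof is correct, but there is nothing in the paper to compare it against: the paper states this lemma as a quoted result from Carlson, Haynsworth and Markham \cite{CARLSON} and gives no proof of it. What you have reconstructed is essentially the classical generalized-Schur-complement argument from that literature, and the details check out: the congruence by the two unitriangular block matrices is computed correctly (the $(2,2)$ entry collapses to $D-CA\+B$ precisely because $A\+AA\+=A\+$), the equivalences $(I-AA\+)B=0\Leftrightarrow R(B)\subseteq R(A)\Leftrightarrow N(A^*)\subseteq N(B^*)$ and $C(I-A\+A)=0\Leftrightarrow N(A)\subseteq N(C)$ are sound, and the forward direction via the submatrix rank inequalities $rank(M)\ge rank\left(\begin{array}{cc} A & B\end{array}\right)\ge rank(A)$ and $rank(M)\ge rank\left(\begin{array}{c} A \\ C\end{array}\right)\ge rank(A)$ is a clean way to force the two range inclusions before reading off $rank(M)=rank(A)+rank(D-CA\+B)$. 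One caveat worth flagging: as printed, the lemma writes $A\+$, i.e.\ the indefinite-weighted Moore--Penrose inverse $A^{[\dag]}$, which need not exist; the result as cited, and as the paper actually uses it (in the proof of Lemma \ref{rankabcd} the Schur complement appears as $D-CA^\dag B$ with the ordinary dagger), concerns the Euclidean Moore--Penrose inverse $A^\dag$, which always exists. Your closing remark resolves this correctly: the argument only needs the two equations $AA\+A=A$ and $A\+AA\+=A\+$, so it is valid for $A^\dag$ and equally for $A^{[\dag]}$ whenever the latter exists, while the null-space conditions involve the ordinary adjoint and are handled by orthogonal complementation in the standard inner product, independently of any weights.
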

		
		\begin{lem}[\cite{Tian2002}, Theorem 1.2]\label{propwithMP} 
		Let $A$, $B$, $C$ and $D$  be matrices with suitable orders. Then
		\begin{eqnarray*}
		rank\left(\begin{array}{cc} A^*AA^* & A^*B \\CA^* & D
		\end{array}\right) = rank(A)+rank(D-CA^\dag B).
		\end{eqnarray*}
		\end{lem}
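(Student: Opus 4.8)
The plan is to reduce the assertion to the elementary Schur-complement rank identity of Lemma~\ref{prop} by writing the two off-diagonal blocks as a right and a left multiple of the corner block $G := A^*AA^*$. First I would record the two range inclusions $R(A^*B)\subseteq R(G)$ and $R\big((CA^*)^*\big)\subseteq R(G^*)$. The first holds because $R(G)=R(A^*AA^*)=R(A^*)$ while $R(A^*B)\subseteq R(A^*)$ trivially; the second is the same statement applied to $A^*$ in place of $A$, since $G^*=AA^*A$ has range $R(A)\supseteq R(AC^*)$. These inclusions guarantee the existence of matrices $\widetilde B$ and $\widetilde C$ with $A^*B=G\widetilde B$ and $CA^*=\widetilde C G$; explicitly one may take $\widetilde B=G^\dagger A^*B$ and $\widetilde C=CA^*G^\dagger$, the inclusions ensuring $GG^\dagger(A^*B)=A^*B$ and $(CA^*)G^\dagger G=CA^*$.

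With these substitutions the matrix becomes $\left(\begin{smallmatrix} G & G\widetilde B \\ \widetilde C G & D\end{smallmatrix}\right)$, which is exactly the pattern of Lemma~\ref{prop} with $G$ in the role of its $A$. Applying that lemma immediately yields $rank\left(\begin{smallmatrix} A^*AA^* & A^*B \\ CA^* & D\end{smallmatrix}\right)=rank(G)+rank\big(D-\widetilde C G\widetilde B\big)$, and it remains only to identify the two summands with $rank(A)$ and $rank(D-CA^\dagger B)$ respectively.

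For the first summand I would show $rank(A^*AA^*)=rank(A)$: since $A^*A$ is Hermitian it has index one, so $rank\big((A^*A)^2\big)=rank(A^*A)=rank(A)$, and the chain $rank(A)=rank(A^*AA^*A)\le rank(A^*AA^*)\le rank(A^*)=rank(A)$ forces equality. For the second summand, grouping gives $\widetilde C G\widetilde B=\widetilde C\big(G\widetilde B\big)=\widetilde C\big(A^*B\big)=\big(\widetilde C A^*\big)B$, so everything reduces to the identity $\widetilde C A^*=CA^\dagger$, i.e.\ to the single matrix identity $A^*G^\dagger A^*=A^*(A^*AA^*)^\dagger A^*=A^\dagger$. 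This last identity is the crux of the argument and the step I expect to be the main obstacle; I would verify it cleanly from a singular value decomposition $A=U\Sigma V^*$, under which $G^\dagger=U(\Sigma^*\Sigma\Sigma^*)^\dagger V^*$ and a direct computation collapses $\Sigma^*(\Sigma^*\Sigma\Sigma^*)^\dagger\Sigma^*$ to $\Sigma^\dagger$, or alternatively by checking the four Penrose equations together with the standard factorizations $A^\dagger=A^*(AA^*)^\dagger=(A^*A)^\dagger A^*$. Substituting these two evaluations into the Schur-complement formula then completes the proof.
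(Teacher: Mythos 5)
Your proof is correct, but it takes a genuinely different route from the paper's. (Strictly speaking, the paper never proves this lemma at all --- it is quoted from Tian's paper --- but the paper does prove its indefinite analogue, Theorem \ref{rankequivalance}, and that proof specializes verbatim to the Euclidean case, so it is the natural benchmark.) The paper's method also uses Lemma \ref{prop} as the workhorse, but reaches it by a two-sided multiplication trick: multiplying $\left(\begin{smallmatrix} A^*AA^* & A^*B \\ CA^* & D \end{smallmatrix}\right)$ on both sides by $\left(\begin{smallmatrix} (A^\dag)^* & 0 \\ 0 & I \end{smallmatrix}\right)$ yields $\left(\begin{smallmatrix} A & AA^\dag B \\ CA^\dag A & D \end{smallmatrix}\right)$, and multiplying the latter on both sides by $\left(\begin{smallmatrix} A^* & 0 \\ 0 & I \end{smallmatrix}\right)$ recovers the original matrix, so the two matrices have equal rank; Lemma \ref{prop} applied to the second matrix then gives $rank(A)+rank(D-CA^\dag AA^\dag B)=rank(A)+rank(D-CA^\dag B)$. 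Those verifications need only $AA^\dag A=A$ and the Hermitianness of $AA^\dag$ and $A^\dag A$. You instead keep $G=A^*AA^*$ as the pivot block, factor the off-diagonal blocks through $G$ via the projections $GG^\dag$ and $G^\dag G$, and then must evaluate the Schur complement, which forces you to prove the auxiliary identity $A^*(A^*AA^*)^\dag A^*=A^\dag$; your SVD verification of it is correct, as is your rank computation $rank(A^*AA^*)=rank(A)$, so the argument goes through. What the paper's route buys is precisely what the paper needs: since it invokes nothing beyond the Penrose equations, it transfers unchanged to the indefinite-inner-product setting, where no SVD is available and where even the existence of the relevant $[\dag]$-inverses is not automatic --- your key identity would be delicate to even state there. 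Your route is self-contained and fine for the Euclidean statement, but it is longer and its crux does not generalize; a minor presentational quibble is that you assert $R(G)=R(A^*)$ before establishing $rank(G)=rank(A)$, though this is harmless since one can see directly that $R(A^*AA^*)=A^*R(AA^*)=A^*R(A)=R(A^*A)=R(A^*)$.
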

		
		\section{Reverse Order Law}

		We start the section with examples which illustrate that between IIPSs,
		$(AB)^{[\dag]}$ may not exist although $A^{[\dag]}$ and $B^{[\dag]}$ exist, and even though Moore-Penrose inverses of $A, B$ and $AB$  exist, the reverse order law $ (AB)^{[\dag]}=B^{[\dag]}A^{[\dag]}$ does not hold.

		\begin{ex}
		Let $A=\left(\begin{array}{cc} 1& 1\\ 1& 0
		\end{array}\right)$, $B=\left(\begin{array}{ccc} 0 & 1 \\ 0&
			0\end{array}\right)$  and $M=N=\left(\begin{array}{ccc} 1 & 0 \\ 0&
			-1\end{array}\right)$. Clearly, $ B^{[*]}=\left(\begin{array}{cc} 0 & 0\\ -1 & 0
		\end{array}\right)$ and $(AB)^{[*]}= \left(\begin{array}{cc} 0 & 0\\ -1 & 1
		\end{array}\right)$. Then $A$ is non-singular and $ rank(B)=rank(B  B^{[*]})=rank(B^{[*]}B)$, so both $A^{[\dag]}$ and $B^{[\dag]}$ exist.  Also, $ AB=\left(\begin{array}{cc} 0& 1\\ 0& 1
		\end{array}\right)$ and $ rank(AB)\neq rank((AB)^{[*]}AB)$, hence $(AB)^{[\dag]}$ does not exist.
		\end{ex}

		\begin{ex}
		Let $A=\left(\begin{array}{cc} 1& 2\\ 0& 0
		\end{array}\right)$,  $B=\left(\begin{array}{ccc} 2 & 1 \\ 0&
			0\end{array}\right)$ and $M=N=\left(\begin{array}{ccc} 1 & 0 \\ 0&
			-1\end{array}\right)$. Clearly, $A^{[*]}=\left(\begin{array}{cc} 1& 0\\ -2& 0\end{array}\right)$ and $B^{[*]}=\left(\begin{array}{cc} 2& 0\\ -1& 0\end{array}\right)$. Then
		$AA^{[*]}=\left(\begin{array}{cc} -3& 0\\ 0& 0
		\end{array}\right), A^{[*]}A=\left(\begin{array}{cc} 1& 2\\ -2& -4
		\end{array}\right),\\  BB^{[*]}=\left(\begin{array}{cc} 3& 0\\ 0& 0
		\end{array}\right)$ and $B^{[*]}B=\left(\begin{array}{cc} 4& 2\\ -2& -1
		\end{array}\right).$ Hence $A^{[\dag]}=-\frac{1}{3}\left(\begin{array}{cc} 1& 0\\ -2& 0
		\end{array}\right)$ and $B^{[\dag]}=\frac{1}{3}\left(\begin{array}{cc} 2& 0\\ -1& 0
		\end{array}\right)$. Moreover, $(AB)^{[\dag]}=\frac{1}{3}\left(\begin{array}{cc} 2& 0\\ -1& 0
		\end{array}\right)$ and $B^{[\dag]}A^{[\dag]}=-\frac{1}{9}\left(\begin{array}{cc} 2& 0\\ -1& 0
		\end{array}\right)$. Thus $ (AB)^{[\dag]}\neq B^{[\dag]}A^{[\dag]}$.
		\end{ex}

		Motivated by the above examples, we show  when the reverse order law holds good in an indefinite inner product space. Before presenting the main results, we collect some basic results.
		
		\begin{lem}\label{rankabcd}
		Let $A$, $B$, $C$ and $D$ be matrices with suitable orders. If 
		\begin{eqnarray*}
		rank\left(\begin{array}{cc} A & B \\ C & D
		\end{array}\right)=rank(A)=rank(B)=rank(C),
		\end{eqnarray*}
		then $rank(A)=rank(D)$.
		\end{lem}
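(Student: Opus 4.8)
The plan is to distill the four coinciding ranks into two structural facts---an equality of null spaces and a column factorization---and then close with a dimension count. Write $r=rank(A)=rank(B)=rank(C)=rank\left(\begin{smallmatrix} A & B \\ C & D\end{smallmatrix}\right)$. First I would compare the first block column $\left(\begin{smallmatrix} A \\ C\end{smallmatrix}\right)$ with the whole matrix: deleting the columns of $\left(\begin{smallmatrix} B \\ D\end{smallmatrix}\right)$ can only lower the rank, so $rank\left(\begin{smallmatrix} A \\ C\end{smallmatrix}\right)\le r$, while the block $A$ forces $rank\left(\begin{smallmatrix} A \\ C\end{smallmatrix}\right)\ge rank(A)=r$; hence it equals $r=rank(A)$. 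Reading this equality on the level of row spaces gives $R(C^{*})\subseteq R(A^{*})$, and since $rank(C)=rank(A)$ the inclusion is an equality, so $N(C)=N(A)$. Reading instead the column space of the full matrix, the hypothesis $rank\left(\begin{smallmatrix} A & B \\ C & D\end{smallmatrix}\right)=rank\left(\begin{smallmatrix} A \\ C\end{smallmatrix}\right)$ forces the columns of $\left(\begin{smallmatrix} B \\ D\end{smallmatrix}\right)$ to lie in the column space of $\left(\begin{smallmatrix} A \\ C\end{smallmatrix}\right)$, which produces a matrix $X$ with $B=AX$ and $D=CX$.

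With the factorization $D=CX$ in hand, the upper bound $rank(D)\le rank(C)=r$ is immediate, so the entire content is the matching lower bound. Here the key is the elementary formula $\dim T(V)=\dim V-\dim\!\left(V\cap N(T)\right)$ applied to the subspace $V=R(X)$. Using it for the map $C$ and for the map $A$, and invoking $N(C)=N(A)$, gives $rank(CX)=\dim C(R(X))=\dim A(R(X))=rank(AX)$. Since $AX=B$ and $rank(B)=r$, this yields $rank(D)=rank(CX)=r=rank(A)$, which is the assertion.

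A shortcut for the first paragraph is to invoke Lemma~\ref{blockmatrix} directly: from $rank\left(\begin{smallmatrix} A & B \\ C & D\end{smallmatrix}\right)=rank(A)$ it delivers at once $D=CA^{\dagger}B$, $N(A)\subseteq N(C)$ and $N(A^{*})\subseteq N(B^{*})$, and the extra rank equalities promote these inclusions to $N(C)=N(A)$ and $R(B)=R(A)$; then $N(A)\subseteq N(C)$ gives $CA^{\dagger}A=C$ and $R(B)=R(A)$ gives $AA^{\dagger}B=B$, and the same dimension count finishes. The step I expect to be the crux is exactly this promotion: the rank hypotheses by themselves only produce one null-space inclusion and one range inclusion, and it is the equalities $rank(B)=rank(C)=rank(A)$ that upgrade them to the equalities $R(B)=R(A)$ and $N(C)=N(A)$ which drive the lower bound on $rank(D)$. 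The upper bound is free; all the work lies in showing that $D$ cannot lose rank.
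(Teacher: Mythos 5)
Your proposal is correct, and its crucial step differs from the paper's. Both proofs split into the easy upper bound $rank(D)\le rank(C)$ and a lower bound, and both begin by extracting the same structural data (a factorization of $\left(\begin{smallmatrix} B \\ D\end{smallmatrix}\right)$ through $\left(\begin{smallmatrix} A \\ C\end{smallmatrix}\right)$ plus range/null-space equalities forced by the equal ranks). But the paper gets there via Lemma~\ref{blockmatrix}, writing $D=CA^{\dagger}B$, and then closes the lower bound entirely inside the Moore--Penrose calculus: using the facts that $R(E)\subseteq R(F)$ implies $FF^{\dagger}E=E$ and its transpose analogue, it shows $C^{\dagger}D=A^{\dagger}B$ and then $C^{\dagger}DB^{\dagger}=A^{\dagger}$, whence $rank(A)=rank(A^{\dagger})\le rank(D)$. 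You instead derive the factorization $B=AX$, $D=CX$ and the equality $N(C)=N(A)$ from rank comparisons alone, and finish with the elementary rank--nullity count $\dim T(V)=\dim V-\dim(V\cap N(T))$ applied to $V=R(X)$, giving $rank(D)=rank(CX)=rank(AX)=rank(B)$ directly. Your route is more elementary and self-contained (no generalized inverses are needed in the main line, and even your shortcut variant only uses Lemma~\ref{blockmatrix} to produce the factorization, not to close the argument); the paper's route stays within the generalized-inverse toolkit used throughout the rest of the article and yields the explicit identity $A^{\dagger}=C^{\dagger}DB^{\dagger}$ as a byproduct. Your observation about where the content lies is also accurate: the hypotheses give only the inclusions $R(C^{*})\subseteq R(A^{*})$ and $R(B)\subseteq R(A)$, and it is precisely the equalities $rank(B)=rank(C)=rank(A)$ that upgrade them to the equalities driving the lower bound --- this is exactly the role they play in the paper's proof as well.
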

		\begin{proof}
		Let $M=\left(\begin{array}{cc} A & B \\ C & D
		\end{array}\right)$. It is given that $rank(M)=rank(A)$. Then by Lemma \ref{blockmatrix}, we have $D-CA^\dag B=0$, $R(C^{*})\subseteq R(A^{*})$ and $R(B)\subseteq R(A).$
		This implies $ D=CA^\dag B\implies rank(D)\le rank(C)=rank(A)$.\\
		Now to prove the reverse inequality, $rank(A)=rank(B)=rank(C)\implies R(A)=R(B)$ and $R(A^{*})=R(C^{*})$.
		Since $D=CA^\dag B$ we get $ C^\dag D=C^\dag CA^\dag B=C^\dag{(C^\dag)}^\dag A^\dag B$.
		Now,
		$R(A^{*})=R(C^{*})\implies  R(A^\dag)=R(C^\dag)$. Using the fact that if $R(E)\subseteq R(F)$ then $FF^\dag E=E$, we get $C^\dag{(C^\dag)}^\dag A^\dag=A^\dag. $ This implies $C^\dag D=A^\dag B \implies C^\dag DB^\dag=A^\dag BB^\dag=A^\dag{(B^\dag)}^\dag B^\dag$.
		Now, $R(A)=R(B) \implies R((A^\dag)^{*})=R((B^\dag)^{*})$ and using the fact that if $R(E^{*})\subseteq R(F^{*})$ then $EF^\dag F=E$ we get, $ C^\dag DB^\dag=A^\dag \implies rank(A^\dag)\le rank(D)\implies rank(A)\le rank(D).$ This completes the proof.
		\end{proof}	
		Next we prove the indefinite version of Lemma \ref{propwithMP}.
		\begin{thm}\label{rankequivalance}
		
		Let $A$, $B$, $C$ and $D$  be matrices with suitable orders. If $A\+$ exists, then
		
		\begin{eqnarray*}rank\left(\begin{array}{cc} A\s AA\s & A\s B \\ CA\s & D
		\end{array}\right) 
		&&= rank\left(\begin{array}{cc} D &  CA\s \\ A\s B & A\s AA\s
		\end{array}\right)\\ 
		&&= rank(A)+rank(D-CA\+ B).
		\end{eqnarray*}
		\end{thm}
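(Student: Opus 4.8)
The plan is to reduce the first block matrix to the shape covered by the Sun--Wei identity (Lemma~\ref{prop}), taking the core block to be $P=A\s AA\s$. The key preliminary fact I would establish is the rank identity $rank(A\s AA\s)=rank(A)$. Granting it, the obvious inclusion $R(A\s AA\s)\subseteq R(A\s)$, and the corresponding inclusion of row spaces coming from $A\s AA\s=(A\s A)A\s$, become equalities by equality of ranks. Since then the columns of $A\s B$ lie in $R(A\s)=R(A\s AA\s)$ and the rows of $CA\s$ lie in the row space of $A\s AA\s$, there exist $Q$ and $R$ with $A\s B=(A\s AA\s)Q$ and $CA\s=R(A\s AA\s)$. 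The first block matrix is then $\left(\begin{array}{cc} P & PQ\\ RP & D\end{array}\right)$ with $P=A\s AA\s$, and Lemma~\ref{prop} gives its rank as $rank(A\s AA\s)+rank\big(D-R(A\s AA\s)Q\big)=rank(A)+rank\big(D-R(A\s AA\s)Q\big)$.

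The main obstacle, and the only place where the standing hypothesis that $A\+$ exists is really used, is the rank identity $rank(A\s AA\s)=rank(A)$. I would argue it as follows. By Theorem~\ref{property}(iii) the inverse $(A\s A)\+$ exists, and since $A\s A$ is $N$-Hermitian we have $(A\s A)\s=A\s A$; feeding this into the existence criterion of \cite{kkkcsmpinv} collapses its two conditions to $rank(A\s A)=rank((A\s A)^2)$, that is $ind(A\s A)=1$. Also, since $A\+$ exists, that same criterion gives $rank(A\s A)=rank(A)=rank(A\s)$ (the last equality because $M,N$ are invertible), so the inclusion $R(A\s A)\subseteq R(A\s)$ is in fact an equality. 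Applying Lemma~\ref{rankthm} to the index-one matrix $A\s A$ and the matrix $A\s$, using $R((A\s A)A\s)\subseteq R(A\s)$, yields $R(A\s AA\s)=R(A\s A)\cap R(A\s)=R(A\s)$, and hence $rank(A\s AA\s)=rank(A)$.

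It remains to identify the correction term and to dispose of the second matrix. Writing $A\+=A\s(AA\s)\+$ from Theorem~\ref{property}(iv) and using $CA\s=R(A\s AA\s)$, I get $CA\+=CA\s(AA\s)\+=R\,A\s AA\s(AA\s)\+$; grouping $A\s AA\s(AA\s)\+=(A\s A)\,A\s(AA\s)\+=(A\s A)A\+=A\s$ by Theorem~\ref{property}(iv) and then (i), this collapses to $CA\+=RA\s$. Consequently the Schur-type term is $R(A\s AA\s)Q=RA\s B=CA\+B$, so the first block matrix has rank $rank(A)+rank(D-CA\+B)$, as claimed. Finally, the second block matrix is obtained from the first by interchanging its two block rows and its two block columns, operations effected by multiplication with invertible block permutation matrices; these preserve rank, so the second matrix has the same rank as the first, completing the chain of equalities.
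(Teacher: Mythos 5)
Your proof is correct, but it takes a genuinely different route from the paper's. The paper's own proof never needs the identity $rank(A\s AA\s)=rank(A)$, nor any range or index argument: it multiplies the given block matrix on both sides by $\mathrm{diag}((A\+)\s,\, I)$, producing the matrix with blocks $A$, $AA\+ B$, $CA\+ A$, $D$, and multiplies that on both sides by $\mathrm{diag}(A\s,\, I)$ to recover the original matrix; since multiplication never increases rank, the two matrices have equal rank, and Lemma~\ref{prop} applied to the transformed matrix (whose corner block is $A$ itself, flanked by $A(A\+ B)$ and $(CA\+)A$) gives $rank(A)+rank(D-CA\+ AA\+ B)=rank(A)+rank(D-CA\+ B)$; the second displayed equality is dispatched with ``similarly''. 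You instead apply Lemma~\ref{prop} in place, taking the corner block to be $P=A\s AA\s$, which obliges you to prove $rank(A\s AA\s)=rank(A)$ (done via Theorem~\ref{property}(iii), the existence criterion, and Lemma~\ref{rankthm}), to build factorizations $A\s B=PQ$ and $CA\s=RP$, and to identify $RPQ=CA\+ B$; all of these steps are sound, and your derivation that $RA\s=CA\+$ holds for every admissible $R$ makes the Schur term independent of the choices of $Q$ and $R$. As for what each approach buys: the paper's argument is shorter and purely computational, resting only on the defining equations and Theorem~\ref{property}(i); yours needs more machinery, but the facts it develops ($ind(A\s A)=1$, $R(A\s AA\s)=R(A\s)$) are ones the paper invokes elsewhere anyway, and your block-permutation treatment of the second equality is more explicit than the paper's ``similarly''. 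Incidentally, your key rank identity also has a one-line proof from Theorem~\ref{property}(i) and (iv): $A\s=A\s AA\+=(A\s AA\s)(AA\s)\+$, whence $rank(A\s)\le rank(A\s AA\s)$, and the reverse inequality is trivial.
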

		\begin{proof}
		By Theorem \ref{property}	we can easily verify the following relations.
		
		\begin{eqnarray*}\left(\begin{array}{cc} (A\+)\s  & 0 \\ 0& I
		\end{array}\right) \left(\begin{array}{cc} A\s AA\s & A\s B \\ CA\s & D
		\end{array}\right) \left(\begin{array}{cc} (A\+)\s  & 0 \\ 0& I
		\end{array}\right)  = \left(\begin{array}{cc} A  & AA\+ B \\ CA\+ A & D
		\end{array}\right)	\end{eqnarray*}
		
		and
		
		\begin{eqnarray*}\left(\begin{array}{cc} A\s  & 0 \\ 0& I
		\end{array}\right)\left(\begin{array}{cc} A  & AA\+ B \\ CA\+ A& D
		\end{array}\right) \left(\begin{array}{cc} A\s  & 0 \\ 0& I
		\end{array}\right) = \left(\begin{array}{cc} A\s AA\s & A\s B \\ CA\s & D
		\end{array}\right).\end{eqnarray*}
		
		Thus
		
		\begin{eqnarray*}rank\left(\begin{array}{cc} A\s AA\s & A\s B \\ CA\s & D
		\end{array}\right) 
		&&=rank \left(\begin{array}{cc} A  & AA\+ B \\ CA\+ A & D \end{array}\right)\\
		&&= rank(A)+rank(D - CA\+ AA\+ B) \text{  (by Lemma \ref{prop})} \\ 
		&&= rank(A)+rank(D - CA\+ B).
		\end{eqnarray*}\\
		Similarly we can prove the other equality.
		
		\end{proof}
		
		It is known in the Euclidean case that the single expression $R(A^*ABB^*)=R(BB^*A^*A)$ is a necessary and sufficient condition for the reverse order law to hold (\cite{benisraelbook},  p.161). This condition was later shown (\cite{hart}, p.231) to hold in a more general setting. The main result and its proof closely follow those of Greville \cite{Greville}. 
		\begin{thm}\label{reverseequalcondn} Let $A\in\mathbb{C}^{m\times n}$ and $B\in\mathbb{C}^{n\times \ell}$.
		If $A^{[\dag]}$ and $B^{[\dag]}$ exist, then the following are
		equivalent:
		\begin{enumerate}
		\item[(i)] $A^{[*]}ABB^{[*]}$ is range Hermitian.
		\item[(ii)] $R(A^{[*]}AB)\subseteq R(B)$ and $R(BB^{[*]}A^{[*]})\subseteq
		R(A^{[*]})$.
		\item[(iii)] $BB^{[\dag]}A^{[*]}A$ and $A^{[\dag]}ABB^{[*]}$ are range
		Hermitian.
		\item [(iv)] $BB^{[\dag]}A^{[*]}AB= A^{[*]}AB$ and $A^{[\dag]}ABB^{[*]}A^{[*]}=BB^{[*]}A^{[*]}$.
		
		\end{enumerate}
		\end{thm}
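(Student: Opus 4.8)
The plan is to reduce everything to the two $N$-Hermitian matrices $P=A\s A$ and $Q=BB\s$, together with the two idempotents $F=A\+A$ and $E=BB\+$, and then run a Greville-style cycle of implications. First I would record the facts I will lean on. By Theorem \ref{property}(iii) the inverses $(A\s A)\+$ and $(BB\s)\+$ exist, so the existence criterion [\cite{kkkcsmpinv}, Theorem 1] applied to the Hermitian matrices $P$ and $Q$ gives $rank(P)=rank(PP\s)=rank(P^2)$ and similarly for $Q$; hence $ind(P)=ind(Q)=1$. I would also note $R(P)=R(A\s)$ and $R(Q)=R(B)$, that $E$ and $F$ are idempotent and $N$-Hermitian (by \eqref{eqn3}, \eqref{eqn4}) with $R(E)=R(B)$ and $R(F)=R(A\s)$, and the elementary fact $Gv=v\iff v\in R(G)$ for an idempotent $G$.

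Next I would translate the four conditions into the language of $P,Q,E,F$. Using $R(Q)=R(B)$ and $R(P)=R(A\s)$ one gets $R(A\s AB)=R(PQ)$ and $R(BB\s A\s)=R(QP)$, so (i) reads $R(PQ)=R(QP)$, (ii) reads $R(PQ)\subseteq R(Q)$ and $R(QP)\subseteq R(P)$, and the two equations of (iv) read $EPB=PB$ and $FQA\s=QA\s$. For (iii) I would use $(EP)\s=PE$, $(FQ)\s=QF$ together with $R(PE)=R(PB)=R(PQ)$ and $R(QF)=R(QP)$, so that (iii) reads $R(EP)=R(PQ)$ and $R(FQ)=R(QP)$.

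Then I would prove the cycle. The equivalence (i)$\iff$(ii) is the heart: one direction is the trivial inclusions $R(PQ)=R(QP)\subseteq R(Q)$ and $R(QP)=R(PQ)\subseteq R(P)$, while the converse applies Lemma \ref{rankthm} twice, once to $(P,Q)$ and once to $(Q,P)$ (legitimate since $ind(P)=ind(Q)=1$), yielding $R(PQ)=R(P)\cap R(Q)=R(QP)$. The equivalence (ii)$\iff$(iv) is immediate from the projection property: $EPB=PB\iff R(PB)\subseteq R(E)=R(Q)$ and $FQA\s=QA\s\iff R(QA\s)\subseteq R(F)=R(P)$. Finally I would exploit that $\s$ preserves rank, so $rank(EP)=rank(PE)$ and $rank(FQ)=rank(QF)$; each range-Hermitian claim in (iii) is therefore an equality of subspaces of equal dimension and reduces to one inclusion. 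This gives (iv)$\implies$(iii) (from $EPB=PB$ one gets $R(PE)=R(PB)\subseteq R(EP)$, hence $R(EP)=R(PE)$, and symmetrically for $FQ$) and (iii)$\implies$(ii) (since $R(EP)\subseteq R(E)=R(Q)$ forces $R(PQ)=R(PE)=R(EP)\subseteq R(Q)$, and symmetrically).

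The step I expect to be the main obstacle is the preliminary claim $ind(P)=ind(Q)=1$. Lemma \ref{rankthm} is the only tool that upgrades the inclusions of (ii) into the intersection formula needed for (i), and its index-one hypothesis is automatic in the Euclidean EP theory but must be \emph{earned} here from the existence of $(A\s A)\+$ and $(BB\s)\+$ through the existence criterion. Once the index-one fact and the rank invariance of $\s$ are in place, the remainder is careful bookkeeping with ranges and the identities of Theorem \ref{property}, and no individual estimate is difficult.
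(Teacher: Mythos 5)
Your proof is correct and follows essentially the same route as the paper's: both hinge on the double application of Lemma \ref{rankthm} (using $ind(A\s A)=ind(BB\s )=1$) to obtain $R(A\s AB)=R(A\s )\cap R(B)=R(BB\s A\s )$ for (i)$\Leftrightarrow$(ii), the projection/idempotent argument for (ii)$\Leftrightarrow$(iv), and an inclusion-plus-equal-rank count to handle (iii). The only differences are cosmetic: your systematic $P,Q,E,F$ notation, a slightly different cycle ((iv)$\Rightarrow$(iii) in place of the paper's (ii)$\Rightarrow$(iii)), and your explicit derivation of the index-one facts from the existence criterion, which the paper merely asserts as well known.
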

		\begin{proof} \underline{ $(i)\Rightarrow (ii)$ :}
		As  $B=BB^{[\dag]}B=B B^{[*]} (B^{[\dag]})^{[*]}$, we have
		$R(A\s ABB\s)=R(A\s AB)$. Suppose that $A^{[*]}ABB\s$ is range Hermitian. Then
		$$R(A\s AB)=R(A\s ABB\s)=R(BB\s A\s A)\subseteq R(B).$$ The second part follows similarly.
		
		\noindent \underline{$(ii)\Rightarrow(i)$:}
		Let $C=A\s ABB\s $. Then $C(B\+)\s = A\s AB.$ Hence $R(C)=R(A\s ABB\s ) \subseteq R(A\s AB)=R(C(B\+ )\s ) \subseteq R(C)$. Thus $R(C)=R(A\s AB)$. Similarly $R(C\s )=R(BB\s A\s). $ Thus $A\s ABB\s $ is range Hermitian iff $R(A\s AB)=R(BB\s A\s)$. Suppose $R(A\s AB)\subseteq R(B)$. It is a well-known fact that $ind(A\s A)=1.$ Thus by Lemma \ref{rankthm}, $R(A\s AB)=R(A\s A)\cap R(B)=R(A\s )\cap R(B)$. On the other hand, again by Lemma \ref{rankthm}, $R(BB\s A\s )\subseteq R(A\s )$
		and $ind(BB\s )=1$ give $R(BB\s A\s )=R(BB\s )\cap R(A\s )=R(B)\cap R(A\s ).$ Thus $R(BB\s A\s  )=R(A\s AB). $
		Therefore, $A\s ABB\s $ is range Hermitian.
		
		\noindent \underline{$(ii)\Leftrightarrow(iv)$:} Straight forward.
		
		\noindent \underline{ $(ii)\Rightarrow (iii)$ :}
		Suppose that $R(A\s AB) \subseteq R(B)$.
		As $R(A\s ABB\+)\subseteq R(A\s AB) \subseteq R(B)$, we get
		$A\s ABB\+=BB\+A\s ABB\+$ and hence it can be shown that $$R((BB\+ A\s A)\s)=
		R(A\s ABB\+)= R(BB\+A\s A).$$
		Thus $BB\+ A\s A$ is range Hermitian. In a similar way, using the inclusion relation $$R(BB^{[*]}A^{[*]})\subseteq
		R(A^{[*]}),$$ we can prove that $A^{[\dag]}ABB^{[*]}$ is also range Hermitian.

		\noindent \underline{$(iii) \Rightarrow (ii)$ :}
		Suppose $BB\+ A\s A$ is range Hermitian. Then $R(BB^{\+}A\s A)=R(A\s A BB\+$). It  clear that $R(A\s AB)=R(A\s ABB\+ B)\subseteq R(A\s ABB\+)=R(BB\+ A\s A) \subseteq R(B)$. Thus $BB^{[\dag]}A^{[*]}AB= A^{[*]}AB$. Similarly, we can prove $A^{[\dag]}ABB^{[*]}A^{[*]}=BB^{[*]}A^{[*]}$.

		\end{proof}
		\begin{thm} \label{reverseequalcondn2}
		Let $A\in \mathbb{C}^{m\times n}$, $B\in \mathbb{C}^{n\times \ell}$   and $D=AB$. If $A^{[\dag]}$ and $B^{[\dag]}$ exist, then the following are
		equivalent:
		\begin{enumerate}
		\item [(i)]  $rank\left(\begin{array}{cc} D & AA^{[*]}D \\ DB^{[*]}B &
			DD^{[*]}D \end{array}\right) =rank(D)$, where $D=AB$.
		\item [(ii)] $(AB)^{[\dag]}$ exists and
		$(AB)^{[\dag]}=B^{[\dag]}A^{[\dag]}$.
		\end{enumerate}
		
		\end{thm}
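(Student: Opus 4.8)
The plan is to establish the equivalence (i)$\Leftrightarrow$(ii) by routing both statements through the characterisations of Theorem \ref{reverseequalcondn}. I would show that each of (i) and (ii) is equivalent to the pair of range inclusions $R(A^{[*]}AB)\subseteq R(B)$ and $R(BB^{[*]}A^{[*]})\subseteq R(A^{[*]})$ (condition (ii) there, or the algebraically equivalent identities of condition (iv)); the theorem then follows by transitivity. Thus the proof decomposes into a \emph{Greville bridge}, linking these range conditions to the reverse order law, and a \emph{rank bridge}, linking them to the single rank identity in (i). It is worth noting at the outset that the rank formula of Theorem \ref{rankequivalance} does \emph{not} apply to the matrix in (i) directly: its pivot block carries $D$ and $DD^{[*]}D$ rather than a term of the form $D^{[*]}DD^{[*]}$, so the reduction must go through the range conditions rather than a single application of that formula.

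For the Greville bridge, suppose the range conditions hold and put $X=B^{[\dagger]}A^{[\dagger]}$ and $D=AB$. I would verify the four defining equations \eqref{eqn1}--\eqref{eqn4} for $X=D^{[\dagger]}$. The engine of this verification is Theorem \ref{property}, in particular $A^{[*]}=A^{[*]}AA^{[\dagger]}=A^{[\dagger]}AA^{[*]}$ together with the idempotency and $[*]$-self-adjointness of $AA^{[\dagger]}$ and $A^{[\dagger]}A$ (and their $B$-analogues), combined with condition (iv) in the form $BB^{[\dagger]}A^{[*]}AB=A^{[*]}AB$ and $A^{[\dagger]}ABB^{[*]}A^{[*]}=BB^{[*]}A^{[*]}$; these are exactly what is needed to collapse the products $DXD$, $XDX$, $DX$ and $XD$. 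Conversely, if $D^{[\dagger]}$ exists and equals $B^{[\dagger]}A^{[\dagger]}$, then expanding the self-adjointness relations $(DD^{[\dagger]})^{[*]}=DD^{[\dagger]}$ and $(D^{[\dagger]}D)^{[*]}=D^{[\dagger]}D$ returns the two range inclusions, so this bridge is a genuine equivalence. The scheme parallels Greville's original one, with $[*]$ replacing the Euclidean adjoint throughout.

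For the rank bridge, write $M=\left(\begin{smallmatrix} D & AA^{[*]}D\\ DB^{[*]}B & DD^{[*]}D\end{smallmatrix}\right)$ and apply Carlson's Lemma \ref{blockmatrix} (with the ordinary adjoint and ordinary inverse $D^{\dagger}$): $rank(M)=rank(D)$ holds iff (a) $R(AA^{[*]}D)\subseteq R(D)$, (b) $R((DB^{[*]}B)^{*})\subseteq R(D^{*})$, and (c) the Schur complement $DD^{[*]}D-(DB^{[*]}B)D^{\dagger}(AA^{[*]}D)$ vanishes. Since $D=AB$, the easy implications from the range conditions to (a) and (b) are immediate, while the converse implications follow by cancelling the outer factors through Lemma \ref{rankthm}, which applies because $ind(A^{[*]}A)=ind(BB^{[*]})=1$ (as already used in the proof of Theorem \ref{reverseequalcondn}). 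Granting (a) and (b), the block eliminations they permit rewrite $rank(M)-rank(D)$ as the rank of a commutator $PQ-QP$ of two idempotents $P,Q$ assembled from the projections $AA^{[\dagger]},A^{[\dagger]}A,BB^{[\dagger]},B^{[\dagger]}B$; I would then invoke Lemma \ref{rankpq}, noting that it requires only idempotency and so is insensitive to the weights. The commutator vanishes exactly when the range-Hermitian condition (iii) of Theorem \ref{reverseequalcondn} holds, which closes the bridge.

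The hard part is the rank bridge, for two related reasons. First, Lemma \ref{blockmatrix} is a Euclidean statement, so (a)--(c) are phrased with the ordinary adjoint $*$ and the ordinary inverse $D^{\dagger}$, whereas the data lives naturally in the $[*]$-world; the delicate step is to show that the Euclidean Schur complement in (c) reduces to the vanishing of the indefinite projection commutator, which I would do by repeatedly applying the identities of Theorem \ref{property} and converting between the two adjoints via $X^{[*]}=N^{-1}X^{*}M$ while keeping ranks fixed. Second, one must confirm that (a), (b), (c) are not merely implied by but are jointly equivalent to the two range inclusions; here Lemma \ref{rankabcd} and the additive rank formula of Lemma \ref{prop} are the natural instruments for the remaining rank bookkeeping. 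Once both bridges are in place, (i)$\Leftrightarrow$(ii) is immediate from Theorem \ref{reverseequalcondn}.
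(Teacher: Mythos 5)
Your plan cannot be completed as stated, because both of your ``bridges'' assert equivalences with the conditions of Theorem \ref{reverseequalcondn} that the paper itself identifies as open. What is actually available is one-directional: the Greville-type range conditions imply (i) (Theorem \ref{reverseequalcondn3}) and hence (ii). The converse half of your Greville bridge --- that $(AB)^{[\dag]}=B^{[\dag]}A^{[\dag]}$ forces $R(A^{[*]}AB)\subseteq R(B)$ and $R(BB^{[*]}A^{[*]})\subseteq R(A^{[*]})$ --- is proved in the paper only under the extra hypothesis
\[
rank\left(\begin{array}{cc} B^{[*]}A^{[*]} & B^{[*]}B \\ AA^{[*]} & AB \end{array}\right)=rank\left(\begin{array}{cc} A^{[*]} & B \end{array}\right)
\]
(Theorem \ref{rankequlity}), and Section 4 states explicitly that removing this hypothesis, or finding a counterexample, is an open problem. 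Your justification (``expanding the self-adjointness relations returns the two range inclusions'') transplants Greville's Euclidean necessity argument, which silently uses facts such as $rank(X)=rank(X^{[*]}X)$ and the automatic existence of Moore--Penrose inverses of auxiliary products; with indefinite weights these fail, which is the whole point of this paper. The rank bridge has the same defect: by Lemma \ref{blockmatrix}, Carlson's conditions (a), (b), (c) are equivalent to (i), so your claim that they are equivalent to the range inclusions would, combined with the theorem being proved, again settle the open problem. Moreover the specific cancellation step fails: from (a), i.e.\ $R(AA^{[*]}AB)\subseteq R(AB)$, left-cancelling $A$ via $A^{[\dag]}$ and Theorem \ref{property}(i) yields only $R(A^{[*]}AB)\subseteq R(A^{[\dag]}AB)$, not $R(A^{[*]}AB)\subseteq R(B)$, while Lemma \ref{rankthm} applied to (a) merely gives $R(AA^{[*]}AB)=R(AA^{[*]})\cap R(AB)=R(AB)$, which carries no information about $R(B)$.

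The premise that pushed you onto this route is also false: Theorem \ref{rankequivalance} \emph{does} apply directly to the matrix in (i). Use its second (reflected) form with pivot $D^{[*]}$ in place of $A$, and with $AA^{[*]}$, $B^{[*]}B$ as the corner factors; since $(D^{[*]})^{[*]}=D$, the block matrix in (i) has exactly the required shape, so as soon as $D^{[\dag]}$ (equivalently $(D^{[*]})^{[\dag]}$) exists,
\[
rank\left(\begin{array}{cc} D & AA^{[*]}D \\ DB^{[*]}B & DD^{[*]}D \end{array}\right)=rank(D)+rank\left(D^{[*]}-B^{[*]}BD^{[\dag]}AA^{[*]}\right).
\]
This is the engine of the paper's proof. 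For (i)$\Rightarrow$(ii), Lemma \ref{rankabcd} first gives $rank(D)=rank(DD^{[*]}D)$, hence $rank(D)=rank(DD^{[*]})=rank(D^{[*]}D)$ and $D^{[\dag]}$ exists; then hypothesis (i) forces $D^{[*]}=B^{[*]}BD^{[\dag]}AA^{[*]}$, and pre-multiplying by $(B^{[*]}B)^{[\dag]}$, post-multiplying by $(AA^{[*]})^{[\dag]}$ and applying Theorem \ref{property} collapses this identity to $B^{[\dag]}A^{[\dag]}=D^{[\dag]}$. For (ii)$\Rightarrow$(i), substituting $(D^{[*]})^{[\dag]}=(B^{[\dag]}A^{[\dag]})^{[*]}$ into the same formula and using $AA^{[*]}(A^{[\dag]})^{[*]}=A$ and $(B^{[\dag]})^{[*]}B^{[*]}B=B$ gives $rank(D)+rank(D-AB)=rank(D)$. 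The theorem is engineered precisely to bypass Theorem \ref{reverseequalcondn}; any proof that routes the equivalence through it must first resolve the open problem of Section 4.
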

		\begin{proof}

		\noindent \underline{$(i)\Rightarrow(ii)$ :}
		First we prove the existence of Moore-Penrose inverse of $AB$. For that, let $E=AA\s D$. It is easy to observe that $D=AB=(AA\s )\+(AA\s) AB=(AA\s)\+ E$. Thus $rank(D)=rank((AA\s)\+ E)\leq rank(E)=rank(AA\s D)\leq rank(D)$.
		It shows that $rank(D)=rank(AA\s D)$. Similarly, we can prove that $rank(D)=rank(DB\s B)$.
		
		Suppose $rank\left(\begin{array}{cc} D & AA^{[*]}D \\ DB^{[*]}B &
			DD^{[*]}D \end{array}\right) =rank(D)$. Then  $rank(D)=rank(DD\s D)$ by Lemma \ref{rankabcd}. It concludes that $rank(D)=rank(DD\s ) = rank(D\s D)$. Thus $(AB)\+$ exists.	
		By Theorem \ref{rankequivalance},
		\begin{eqnarray*}
		rank\left(\begin{array}{cc} D & AA\s D \\ DB\s B& DD\s D
		\end{array}\right) &&= rank(D\s)+rank(D-AA\s (D\s)\+ B\s B)\\&& = rank(D) + rank(D\s - B\s BD\+ AA\s) .
		\end{eqnarray*}
		Hence, by the assumption $rank(D\s - B\s BD\+ AA\s)  = 0$. Thus $D\s = B\s BD\+ AA\s$. Pre-multiplying by $(B\s B)\+$ and post-multiplying by $(AA\s)\+$ we get 
		\begin{equation*}
		(B\s B)\+ B\s A\s (AA\s )\+ = (B\s B)\+ B\s BD\+ AA\s (AA\s)\+ . 
		\end{equation*}
		By Theorem \ref{theorem2.2} (v) and (vi),
		\begin{eqnarray*}
		B\+ A\+ &&= (B\s B)\+ B\s BB\s A\s (DD\s)\+ AA\s (AA\s)\+ \\&&= B\s A\s (DD\s)\+ AA\s (AA\s)\+\\ &&= D\+ AA\s (AA\s)\+ = (D\s D)\+ D\s AA\s (AA\s)\+ \\&&= D\+ = (AB)\+.
		\end{eqnarray*}
		
		\noindent \underline{$(ii)\Rightarrow(i)$ :}
		By Theorem \ref{rankequivalance},	\begin{eqnarray*}
		rank\left(\begin{array}{cc} D & AA\s D \\ DB\s B& DD\s D
		\end{array}\right) &&= rank(D\s)+rank(D-AA\s(D\s)\+ B\s B)\\&& = rank(D) + rank(D - AA\s(B\+A\+)\s B\s B)\\&& = rank(D)+rank(D - AA\s (A\+)\s(B\+)\s  B\s B) \\&& =  rank(D) +rank(D - AB)\\&& = rank(D) .
		\end{eqnarray*}
		\end{proof}
		
		\begin{thm}\label{reverseequalcondn3}
		Let $A\in \mathbb{C}^{m\times n}$  and $B\in \mathbb{C}^{n\times \ell}$ such that $A^{[\dag]}$ and $B^{[\dag]}$ exist. If any one of the conditions listed in Theorem \ref{reverseequalcondn} holds, then
		
		\begin{eqnarray*}
		rank\left(\begin{array}{cc} D & A\s A D \\ DB\s B& DD\s D	\end{array}\right) = rank(D) .
		\end{eqnarray*}

		\end{thm}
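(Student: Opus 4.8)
The plan is to read the asserted rank identity off Theorem \ref{reverseequalcondn2}. Since the four statements of Theorem \ref{reverseequalcondn} are equivalent, I may assume all of them hold, and by Theorem \ref{reverseequalcondn2} the displayed rank equality is equivalent to the assertion that $(AB)^{[\dag]}$ exists and $(AB)^{[\dag]}=B^{[\dag]}A^{[\dag]}$. Thus the whole problem reduces to deriving this reverse order law from the range and range-Hermitian hypotheses; once it is in hand, the implication $(ii)\Rightarrow(i)$ of Theorem \ref{reverseequalcondn2} finishes the proof.

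The heart of the matter is therefore to show that $X:=B^{[\dag]}A^{[\dag]}$ satisfies the four defining equations \eqref{eqn1}--\eqref{eqn4} for $D=AB$. I would transport Greville's classical argument, but with the $N$-Hermitian idempotents $AA^{[\dag]},\,A^{[\dag]}A,\,BB^{[\dag]},\,B^{[\dag]}B$ of Theorem \ref{property} playing the role of the orthogonal projections of the Euclidean theory. The two ``outer'' equations $DXD=D$ and $XDX=X$ should come from the range inclusions of $(ii)$, equivalently the operator identities of $(iv)$, which let the inner factors $BB^{[\dag]}$ and $A^{[\dag]}A$ be absorbed; the two ``symmetry'' equations $(DX)^{[*]}=DX$ and $(XD)^{[*]}=XD$ should come from the range-Hermitian statements $(iii)$. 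I expect the symmetry equations to be the main obstacle: the $MN$-adjoint carries the weight $N$, so that, for instance, $(DX)^{[*]}=(A^{[\dag]})^{[*]}BB^{[\dag]}A^{[*]}$ must be forced to equal $DX=ABB^{[\dag]}A^{[\dag]}$, and the Euclidean manipulations have to be rebuilt through Theorem \ref{property}. A further subtlety is that in the indefinite metric $A^{[*]}A$ is not idempotent and differs from $A^{[\dag]}A$, so an inclusion such as $R(A^{[*]}AB)\subseteq R(B)$ must first be converted into the corresponding statement about $A^{[\dag]}A$ using $ind(A^{[*]}A)=1$ and Lemma \ref{rankthm}.

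Once the reverse order law is established, I would conclude as above by Theorem \ref{reverseequalcondn2}; alternatively, and as an internal check that the relevant residual really vanishes, I would apply Theorem \ref{rankequivalance} with the role of its ``$A$'' played by $D^{[*]}$, which is legitimate because $(D^{[*]})^{[\dag]}=(D^{[\dag]})^{[*]}$ now exists. This rewrites the block rank as $rank(D)+rank\big(D-AA^{[*]}(D^{[*]})^{[\dag]}B^{[*]}B\big)$, and substituting $(D^{[*]})^{[\dag]}=(A^{[*]})^{[\dag]}(B^{[*]})^{[\dag]}$ from the reverse order law together with the identities $AA^{[*]}(A^{[*]})^{[\dag]}=A$ and $(B^{[*]})^{[\dag]}B^{[*]}B=B$, both immediate from Theorem \ref{property}, collapses the residual to $D-AB=0$. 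Hence the block matrix has rank exactly $rank(D)$, as claimed.
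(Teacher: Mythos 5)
Your overall reduction is legitimate: since the statement's top-right block must be read as $AA^{[*]}D$ (as in Theorem \ref{reverseequalcondn2}(i) and the paper's own proof; $A^{[*]}AD$ does not even compose dimensionally), Theorem \ref{reverseequalcondn2} does make the displayed rank equality equivalent to ``$(AB)^{[\dag]}$ exists and $(AB)^{[\dag]}=B^{[\dag]}A^{[\dag]}$,'' and your closing computation with $D^{[*]}$ in the role of $A$ in Theorem \ref{rankequivalance} is correct \emph{once the reverse order law is in hand}. The genuine gap is that you never establish the reverse order law from the conditions of Theorem \ref{reverseequalcondn}. You correctly call this ``the heart of the matter,'' but your treatment of it is a plan, not a proof: the outer Penrose equations ``should come from'' the range inclusions, the symmetry equations are flagged as ``the main obstacle,'' and the indefinite-metric manipulations ``have to be rebuilt'' --- none of this is executed. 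You also cannot plug the hole by citation: inside the paper, the implication you need is exactly Corollary 3.10, which is deduced \emph{from} Theorem \ref{reverseequalcondn3} together with Theorem \ref{reverseequalcondn2}, so invoking it would be circular. As written, the proposal assumes precisely what carries the entire content of the theorem.

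The paper's proof, by contrast, never needs the reverse order law. It takes condition (iv), namely $BB^{[\dag]}A^{[*]}AB=A^{[*]}AB$ and $A^{[\dag]}ABB^{[*]}A^{[*]}=BB^{[*]}A^{[*]}$, and uses it to rewrite the block matrix as
\begin{equation*}
\left(\begin{array}{cc} D & DB^{[\dag]}A^{[*]}D \\ DB^{[*]}A^{[\dag]}D & DD^{[*]}D \end{array}\right),
\end{equation*}
whose off-diagonal blocks now factor through the corner $D$; the rank formula (Theorem \ref{rankequivalance}, in effect Lemma \ref{prop}) then gives $rank(D)+rank\bigl(DD^{[*]}D-DB^{[*]}A^{[\dag]}DB^{[\dag]}A^{[*]}D\bigr)$, and condition (iv), used once more in the forms $BB^{[\dag]}A^{[*]}AB=A^{[*]}AB$ and $ABB^{[*]}A^{[\dag]}A=ABB^{[*]}$, collapses the residual to $DD^{[*]}D-ABB^{[*]}A^{[*]}AB=0$. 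If you want to salvage your route, you must actually carry out the Greville-style verification of the four equations for $X=B^{[\dag]}A^{[\dag]}$ in the indefinite metric; this appears feasible (for instance, condition (iv) together with the $N$-Hermitian idempotency of $BB^{[\dag]}$ yields the commutation relation $BB^{[\dag]}A^{[*]}A=A^{[*]}ABB^{[\dag]}$, which is the engine of Greville's argument), but until that is done your argument proves nothing beyond what it assumes.
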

		
		\begin{proof}
		Suppose that
		$BB^{[\dag]}A^{[*]}AB= A^{[*]}AB$ and $A^{[\dag]}ABB^{[*]}A^{[*]}=BB^{[*]}A^{[*]}$	hold.  Then, we have
		
		\begin{eqnarray*}
		\left(\begin{array}{cc} D & AA^{[*]}D \\ DB^{[*]}B & DD^{[*]}D
		\end{array}\right) &&= \left(\begin{array}{cc} AB & ABB^{[\dag]}A^{[*]}AB \\ ABB^{[*]}A^{[\dag]}AB &
		AB(AB)^{[*]}AB \end{array}\right) \\&&= \left(\begin{array}{cc} D & DB^{[\dag]}A^{[*]}D \\ DB^{[*]}A^{[\dag]}D &
		DD^{[*]}D \end{array}\right).
		\end{eqnarray*}
		By Theorem \ref{rankequivalance},
		\begin{eqnarray*}
		rank\left(\begin{array}{cc} D & AA^{[*]}D \\
		DB^{[*]}B & DD^{[*]}D
		\end{array}\right) &&=rank\left(\begin{array}{cc} D & DB^{[\dag]}A^{[*]}D \\ DB^{[*]}A^{[\dag]}D &
		DD^{[*]}D \end{array}\right)\\&& = rank(D)+ rank(DD^{[*]}D-DB^{[*]}A^{[\dag]}DB\+ A\s D)\\
		&& = rank(D)+ rank(DD^{[*]}D-ABB^{[*]}A^{[\dag]}ABB\+ A\s AB)\\
		&& = rank(D)+ rank(DD^{[*]}D-ABB^{[*]}A^{[\dag]}A A\s AB)\\
		&& = rank(D)+ rank(DD^{[*]}D-ABB^{[*]} A\s AB)\\
		&& =rank(D)+rank(DD^{[*]}D-DD^{[*]}D)\\ && =rank(D).
		\end{eqnarray*}
	
		\end{proof}
		
		\begin{cor}
		Let $A\in \mathbb{C}^{m\times n}$  and $B\in \mathbb{C}^{n\times \ell}$ such that $A^{[\dag]}$ and $B^{[\dag]}$ exist. If any one of the conditions listed in Theorem \ref{reverseequalcondn} holds, then $(AB)^{[\dag]}$ exists and
		$(AB)^{[\dag]}=B^{[\dag]}A^{[\dag]}$.
		\end{cor}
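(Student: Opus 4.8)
The plan is to obtain the corollary as an immediate consequence of the two theorems just established, by chaining them together. Observe that the hypotheses here are exactly those of Theorem \ref{reverseequalcondn3}: both $A\+$ and $B\+$ exist, and one of the equivalent conditions (i)--(iv) of Theorem \ref{reverseequalcondn} holds. So the first step is to apply Theorem \ref{reverseequalcondn3} directly, which furnishes the rank identity
\begin{eqnarray*}
rank\left(\begin{array}{cc} D & AA\s D \\ DB\s B & DD\s D \end{array}\right) = rank(D),
\end{eqnarray*}
where $D=AB$.

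The second step is to recognize this rank identity as precisely condition (i) of Theorem \ref{reverseequalcondn2}. Since $A\+$ and $B\+$ are assumed to exist, the standing hypotheses of Theorem \ref{reverseequalcondn2} are met, and I would invoke the implication $(i)\Rightarrow(ii)$ of that theorem. This yields at once that $(AB)\+$ exists and that $(AB)\+ = B\+ A\+$, which is exactly the assertion of the corollary.

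Since the corollary is a pure composition of the two results already proved, I do not expect any genuine obstacle. The only point requiring care is bookkeeping: I would verify that the block matrix produced by Theorem \ref{reverseequalcondn3} is literally the one appearing in condition (i) of Theorem \ref{reverseequalcondn2}. In particular the top-right block must read $AA\s D$ in both places (the displayed statement of Theorem \ref{reverseequalcondn3} writes $A\s A D$, which appears to be a typographical slip, as its own proof manipulates $AA\s D$ throughout). Once that entry is matched, the two statements coincide verbatim and the implication $(i)\Rightarrow(ii)$ of Theorem \ref{reverseequalcondn2} applies with no additional computation.
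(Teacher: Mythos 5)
Your proposal is correct and is precisely the argument the paper intends: the corollary is stated immediately after Theorem \ref{reverseequalcondn3} with no separate proof, being the composition of Theorem \ref{reverseequalcondn3} with the implication $(i)\Rightarrow(ii)$ of Theorem \ref{reverseequalcondn2}, exactly as you chain them. Your observation that the block $A\s AD$ in the statement of Theorem \ref{reverseequalcondn3} is a typographical slip for $AA\s D$ (as its own proof and Theorem \ref{reverseequalcondn2}(i) both use) is also correct and resolves the only bookkeeping issue.
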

		
		\begin{lem}\label{rankrowab}
		Let $A\in \mathbb{C}^{m\times n}, B\in \mathbb{C}^{m\times \ell}$ and $C\in \mathbb{C}^{\ell\times n}$. Then
		\begin{enumerate}
		\item [(i)]\begin{eqnarray*}
		rank\left(\begin{array}{cc} A & B 
		\end{array}\right) =rank\left(\begin{array}{cc} A\s \\ B\s
		\end{array}\right)
		\end{eqnarray*}
		
		\item [(ii)]\begin{eqnarray*}
		rank\left(\begin{array}{cc} A \\ C 
		\end{array}\right) =rank\left(\begin{array}{cc} A\s & C\s
		\end{array}\right).
		\end{eqnarray*}
		\end{enumerate}
		\end{lem}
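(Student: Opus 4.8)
The plan is to reduce both identities to two elementary facts: that $rank(X)=rank(X^*)$, and that rank is preserved under multiplication (on either side) by an invertible matrix. The only ingredient special to the IIPS setting is the explicit form $X\s=N^{-1}X^*M$ of the $MN$-adjoint, together with the observation that a block-diagonal matrix whose diagonal blocks are invertible is itself invertible. So the whole argument is a factorization of the adjoint as a conjugate transpose flanked by invertible matrices.

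For (i), the matrices $A$ and $B$ share the weight $M$ of order $m$ on their common row space, while their column spaces carry weights $N_1$ (order $n$) and $N_2$ (order $\ell$). Writing out the two adjoints and factoring gives
\begin{eqnarray*}
\left(\begin{array}{c} A\s \\ B\s \end{array}\right)
= \left(\begin{array}{cc} N_1^{-1} & 0 \\ 0 & N_2^{-1} \end{array}\right)
  \left(\begin{array}{c} A^* \\ B^* \end{array}\right) M
= \left(\begin{array}{cc} N_1^{-1} & 0 \\ 0 & N_2^{-1} \end{array}\right)
  \left(\begin{array}{cc} A & B \end{array}\right)^* M.
\end{eqnarray*}
Both flanking factors are invertible, hence the rank of the left-hand side equals $rank\big((A\ B)^*\big)=rank(A\ B)$, which is exactly (i).

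For (ii) the roles are interchanged: now $A$ and $C$ share the weight $N$ of order $n$ on their common column space, while their row spaces carry weights $M_1$ (order $m$) and $M_2$ (order $\ell$). The same computation yields
\begin{eqnarray*}
\left(\begin{array}{cc} A\s & C\s \end{array}\right)
= N^{-1} \left(\begin{array}{cc} A^* & C^* \end{array}\right)
  \left(\begin{array}{cc} M_1 & 0 \\ 0 & M_2 \end{array}\right)
= N^{-1} \left(\begin{array}{c} A \\ C \end{array}\right)^*
  \left(\begin{array}{cc} M_1 & 0 \\ 0 & M_2 \end{array}\right),
\end{eqnarray*}
and again invertibility of the flanking factors forces $rank(A\s\ C\s)=rank\big(\begin{smallmatrix} A \\ C \end{smallmatrix}\big)$.

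I expect no genuine obstacle here; the only point requiring care is the bookkeeping of the weights, namely confirming that the left and right factors assembled from the individual weights are block-diagonal with invertible diagonal blocks, so that the flanking matrices are invertible and the rank is unchanged. Once the adjoint is exhibited as a conjugate transpose sandwiched between invertible matrices, both rank identities are immediate.
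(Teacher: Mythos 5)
Your proof is correct and follows essentially the same route as the paper: both arguments write the block of adjoints as the conjugate transpose of the original block flanked by invertible matrices (the block-diagonal of inverse weights on one side, the weight $M$ on the other) and then invoke invariance of rank under multiplication by invertible matrices together with $rank(X)=rank(X^*)$. The only difference is cosmetic — you factor both weights out in a single step, while the paper peels them off one at a time in a chain of rank equalities.
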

		\begin{proof}
		\begin{eqnarray*}
		rank\left(\begin{array}{cc} A\s \\ B\s
		\end{array}\right)=rank\left(\begin{array}{cc} N^{-1}A^{*} M \\ L^{-1}B^{*} M 
		\end{array}\right) &&=
		rank\left(\begin{array}{cc} \left(\begin{array}{cc} N^{-1}A^{*}  \\ L^{-1}B^{*}  
		\end{array}\right) & M 
		\end{array}\right) \\ &&=rank\left(\begin{array}{cc}  N^{-1}A^{*}  \\ L^{-1}B^{*}  
		\end{array}\right)\\ &&=rank\left(\begin{array}{cc} AN^{-1} & BL^{-1}
		\end{array}\right)\\ &&=rank\left(\begin{array}{cc} \left(\begin{array}{cc} A & B 
		\end{array}\right) & \left(\begin{array}{cc} N^{-1} & 0 \\  0 & L^{-1}
		\end{array}\right) 
		\end{array}\right) \\ &&=rank\left(\begin{array}{cc} A & B 
		\end{array}\right).
		\end{eqnarray*}
		Similarly we can prove (ii).
		
		\end{proof}

		\begin{lem}\label{adj_dag}
		Let $A\in \mathbb{C}^{m\times n}$  and $B\in \mathbb{C}^{g\times h}$. If $A^{[\dag]}$ and $B^{[\dag]}$ exist, then
		\begin{enumerate}
		\item [(i)] \begin{eqnarray*}
		\left(\begin{array}{cc} A & 0 \\ 0 & B
		\end{array}\right)\s = \left(\begin{array}{cc} A\s & 0 \\ 0 & B\s
		\end{array}\right)
		\end{eqnarray*}
		
		\item [(ii)] \begin{eqnarray*}
		\left(\begin{array}{cc} A & 0 \\ 0 & B
		\end{array}\right)\+ = \left(\begin{array}{cc} A\+ & 0 \\ 0 & B\+
		\end{array}\right)
		\end{eqnarray*}
		
		\item [(iii)] \begin{eqnarray*}
		\left(\begin{array}{cc} 0 & A \\ B & 0
		\end{array}\right)\+ = \left(\begin{array}{cc} 0 & B\+ \\ A\+ & 0
		\end{array}\right).
		\end{eqnarray*}
		\end{enumerate}
		\end{lem}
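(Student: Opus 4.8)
The plan is to adopt the natural weight convention for block matrices and then reduce each claim to the corresponding statement for the individual blocks $A$ and $B$. Write $M,N$ for the weights attached to $A$ (of orders $m,n$) and $G,H$ for those attached to $B$ (of orders $g,h$); I regard $\left(\begin{smallmatrix} A & 0 \\ 0 & B\end{smallmatrix}\right)$ as a map between indefinite inner product spaces carrying the block-diagonal weights $\tilde N=\left(\begin{smallmatrix} N & 0 \\ 0 & H\end{smallmatrix}\right)$ on the domain and $\tilde M=\left(\begin{smallmatrix} M & 0 \\ 0 & G\end{smallmatrix}\right)$ on the codomain. Part (i) is then immediate from the definition $X\s=\tilde N^{-1}X^{*}\tilde M$: the inverse of a block-diagonal weight is block-diagonal and the triple product is block-diagonal, with diagonal entries $N^{-1}A^{*}M=A\s$ and $H^{-1}B^{*}G=B\s$. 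Note that (i) uses only invertibility of the weights, not the existence of $A\+$ or $B\+$, so it is available for any block-diagonal matrix.

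For part (ii) my strategy is to verify that $X=\left(\begin{smallmatrix} A\+ & 0 \\ 0 & B\+\end{smallmatrix}\right)$ satisfies the four defining equations \eqref{eqn1}--\eqref{eqn4} for $P=\left(\begin{smallmatrix} A & 0 \\ 0 & B\end{smallmatrix}\right)$; since the Moore-Penrose inverse is unique when it exists, this simultaneously establishes that $P\+$ exists and that it equals $X$. Because $P$ and $X$ are block-diagonal, $PXP$ and $XPX$ are block-diagonal with diagonal blocks $AA\+A,\,BB\+B$ and $A\+AA\+,\,B\+BB\+$, so \eqref{eqn1} and \eqref{eqn2} reduce blockwise to the defining equations for $A$ and $B$. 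For \eqref{eqn3} I note that $PX=\left(\begin{smallmatrix} AA\+ & 0 \\ 0 & BB\+\end{smallmatrix}\right)$ is block-diagonal of order $m+g$, so by the computation in (i) its adjoint is $\left(\begin{smallmatrix}(AA\+)\s & 0 \\ 0 & (BB\+)\s\end{smallmatrix}\right)$, and $(AA\+)\s=AA\+$, $(BB\+)\s=BB\+$ by \eqref{eqn3} applied to $A$ and $B$ separately; the condition \eqref{eqn4} on $XP$ is handled identically using \eqref{eqn4} for $A$ and $B$.

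Part (iii) runs through the same template with $Q=\left(\begin{smallmatrix} 0 & A \\ B & 0\end{smallmatrix}\right)$ and $Y=\left(\begin{smallmatrix} 0 & B\+ \\ A\+ & 0\end{smallmatrix}\right)$: one computes $QY=\left(\begin{smallmatrix} AA\+ & 0 \\ 0 & BB\+\end{smallmatrix}\right)$ and $YQ=\left(\begin{smallmatrix} B\+B & 0 \\ 0 & A\+A\end{smallmatrix}\right)$, both block-diagonal, so $QYQ=Q$ and $YQY=Y$ reduce to the defining equations for $A$ and $B$, and the two Hermitian conditions reduce via (i) to \eqref{eqn3}--\eqref{eqn4}. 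The one point requiring genuine care, which I regard as the main (if modest) obstacle, is the weight bookkeeping: because the row and column blocks of $Q$ are interchanged relative to part (ii), the domain of $Q$ carries the weight $\left(\begin{smallmatrix} H & 0 \\ 0 & N\end{smallmatrix}\right)$ while its codomain carries $\left(\begin{smallmatrix} M & 0 \\ 0 & G\end{smallmatrix}\right)$. Thus $QY$ must be tested for Hermitian symmetry against the codomain weight $\left(\begin{smallmatrix} M & 0 \\ 0 & G\end{smallmatrix}\right)$ and $YQ$ against the domain weight $\left(\begin{smallmatrix} H & 0 \\ 0 & N\end{smallmatrix}\right)$; once one checks that the diagonal entries $AA\+,BB\+$ and $B\+B,A\+A$ are paired with the weights $M,G$ and $H,N$ respectively, the blockwise adjoints land exactly on $(AA\+)\s,(BB\+)\s,(B\+B)\s,(A\+A)\s$ and collapse by \eqref{eqn3}--\eqref{eqn4}. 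With this verified, uniqueness yields both the existence of $Q\+$ and the formula $Q\+=Y$.
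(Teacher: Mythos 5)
Your proof is correct, and its core coincides with the paper's: give the block matrices block-diagonal weights, compute the indefinite adjoint entrywise, and verify the four Penrose equations blockwise. The one genuine divergence is how existence is established in (ii)--(iii). The paper first proves existence via the rank criterion: for $T=\left(\begin{smallmatrix}A&0\\0&B\end{smallmatrix}\right)$ it computes $T\s T$ and $TT\s$, obtains $rank(T\s T)=rank(A)+rank(B)=rank(TT\s)=rank(T)$ from the existence of $A\+$ and $B\+$, and invokes the characterization that $T\+$ exists iff $rank(T)=rank(TT\s)=rank(T\s T)$; only then does it remark that the block-diagonal candidate satisfies the Penrose equations. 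You bypass the rank computation entirely: exhibiting a solution of the four equations, together with uniqueness, already yields both existence and the formula. This is sound --- uniqueness of solutions carries over to the indefinite setting because $[*]$ is an involutory anti-homomorphism, and the paper implicitly needs uniqueness anyway to identify its verified candidate with $T\+$ --- so your route is leaner, while the paper's extra step has the side benefit of displaying the existence criterion in action, in keeping with the rank-theoretic flavour of its other results. You also work out part (iii) in full, where the paper only says ``similar to (ii)''; your weight bookkeeping for the anti-diagonal case (domain weight $\left(\begin{smallmatrix}H&0\\0&N\end{smallmatrix}\right)$, codomain weight $\left(\begin{smallmatrix}M&0\\0&G\end{smallmatrix}\right)$, so that $QY$ and $YQ$ are tested against $\left(\begin{smallmatrix}M&0\\0&G\end{smallmatrix}\right)$ and $\left(\begin{smallmatrix}H&0\\0&N\end{smallmatrix}\right)$ respectively) is exactly the point a careless reader would get wrong, and you get it right.
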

		
		\begin{proof}
		\begin{enumerate}
		\item [(i)] Let $
		K=	\left(\begin{array}{cc} M & 0 \\ 0 & G
		\end{array}\right)$ and  $
		L=	\left(\begin{array}{cc} N & 0 \\ 0 & H
		\end{array}\right).$
		Without loss of generality we may assume that
		$$T\s =L^{-1}T^{*} K, \text{where } T= \left(\begin{array}{cc} A & 0 \\ 0 & B
		\end{array}\right).$$
		Then  \begin{eqnarray*}T\s &&=\left(\begin{array}{cc} N^{-1} & 0 \\ 0 & H^{-1} \end{array}\right) \left(\begin{array}{cc} A^{*} & 0 \\ 0 & B^{*}
		\end{array}\right) \left(\begin{array}{cc} M & 0 \\ 0 & G
		\end{array}\right)\\ &&=\left(\begin{array}{cc} N^{-1}A^{*} M & 0 \\0 & H^{-1}B^{*} G 
		\end{array}\right)\\ &&= \left(\begin{array}{cc} A\s & 0 \\ 0 & B\s
		\end{array}\right).
		\end{eqnarray*}
		\item [(ii)] Suppose  $A^{[\dag]}$ and $B^{[\dag]}$ exist.
		$$T\s T =\left(\begin{array}{cc} A\s A & 0 \\ 0 & B\s B
		\end{array}\right) \text{ and } TT\s = \left(\begin{array}{cc} AA\s & 0 \\ 0 & BB\s
		\end{array}\right).$$
		Thus $rank(T\s T)=rank(A)+rank(B)=rank(TT\s )=rank(T),$
		which implies $T\+ $ exists.\\
		Also it is easy to verify that $T\+ =\left(\begin{array}{cc} A\+ & 0 \\ 0 & B\+ \end{array}\right)$ satisfies the Moore-Penrose equations.
		\end{enumerate}
		(iii) is similiar to (ii).
		
		\end{proof}
		
		\begin{thm}\label{rankcondition}
		Let $A,B,C,D,P$ and $Q$ be matrices with suitable orders such that $P\+ $ and $Q\+ $ exist. Then 
		\begin{eqnarray*}
		rank(D-CP\+ AQ\+ B)	=rank\left(\begin{array}{ccc} P\s AQ\s & P\s PP\s & 0 \\ Q\s QQ\s & 0 & Q\s B \\ 0 & CP\s & -D
		\end{array}\right)-rank(P)-rank(Q).
		\end{eqnarray*}
		
		\end{thm}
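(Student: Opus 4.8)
The plan is to reduce the right-hand side block matrix to a form where Theorem \ref{rankequivalance} (the indefinite rank-equivalence formula) applies directly. The key observation is that the formula I want to prove is the indefinite analogue of a standard Euclidean rank formula for an expression of the shape $D - CP^{[\dag]}AQ^{[\dag]}B$ involving two Moore-Penrose inverses. First I would manipulate the $3\times 3$ block matrix by performing block row and column operations that do not change the rank, aiming to isolate the diagonal-like structure governed by $P^{[*]}PP^{[*]}$ and $Q^{[*]}QQ^{[*]}$, which by Theorem \ref{rankequivalance} each contribute $rank(P)$ and $rank(Q)$ respectively, together with a Schur-complement term that should collapse to $D - CP^{[\dag]}AQ^{[\dag]}B$.

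The cleanest route is to set up the block matrix so that the two occurrences of $P^{[*]}PP^{[*]}$ and $Q^{[*]}QQ^{[*]}$ can be ``cleared'' using the congruence-type transformations already exploited in the proof of Theorem \ref{rankequivalance}. Recall that there I conjugated $P^{[*]}PP^{[*]}$ by $\mathrm{diag}((P^{[\dag]})^{[*]}, I)$ to turn it into $P$ and simultaneously converted the off-diagonal entries $P^{[*]}B$ and $CP^{[*]}$ into $PP^{[\dag]}B$ and $CP^{[\dag]}P$. The plan is to apply this same pair of transformations, once for the block involving $P$ and once for the block involving $Q$, to convert the $3\times 3$ matrix into one whose $(1,2)$ and $(2,1)$ diagonal-adjacent blocks become plain $P$ and $Q$. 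After these transformations the matrix takes a form to which Lemma \ref{prop} (the Schur-complement rank identity) can be applied twice in succession, peeling off $rank(P)$ first and then $rank(Q)$, each time leaving behind a progressively reduced Schur complement.

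Concretely, after clearing the $P$-block the residual Schur complement involving $Q$ and $D$ should read something like $D - C P^{[\dag]} A Q^{[\dag]} B$ once I use the property $P^{[\dag]} = P^{[*]}(PP^{[*]})^{[\dag]}$ from Theorem \ref{property}(iv) to rewrite $CP^{[\dag]}$ and the analogous identity for $Q$. The arithmetic that must be checked is that the cross term produced by eliminating both $P$ and $Q$ reproduces exactly $CP^{[\dag]}AQ^{[\dag]}B$ with the correct sign, matching the $-D$ entry in the corner. The bookkeeping of signs and of which factor is $(P^{[\dag]})^{[*]}$ versus $P^{[\dag]}$ is where I expect the main obstacle to lie, since the indefinite adjoint $[*]$ does not behave like the Hermitian adjoint and one must consistently use Theorem \ref{property}(i),(iv) rather than ordinary transpose identities.

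The main obstacle, then, is ensuring that each elimination step is a genuine rank-preserving operation in the indefinite setting: the matrices $(P^{[\dag]})^{[*]}$ and $Q^{[\dag]}$ used as multipliers need not be invertible, so I cannot simply multiply by them and claim rank invariance. The resolution is to mimic Theorem \ref{rankequivalance} faithfully by sandwiching with the invertible blocks $P^{[*]}$, $Q^{[*]}$ and their $[\dag]$-adjoints only in the combinations that were already shown there to be rank-preserving, and to invoke Lemma \ref{prop} (rather than direct inversion) for the actual rank reduction. Once both $P$ and $Q$ blocks are handled this way and the subtraction of $rank(P)+rank(Q)$ on the right-hand side is accounted for, the remaining single Schur complement yields $rank(D - CP^{[\dag]}AQ^{[\dag]}B)$, completing the argument.
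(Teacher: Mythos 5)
Your plan is correct, but it takes a genuinely different (and in fact shorter) route than the paper's. You work top-down on the $3\times 3$ block matrix and peel off $P$ and $Q$ one at a time; moreover, you can skip the congruence transformations you propose to redo, because after permuting the first two block columns (an invertible operation) the matrix becomes
\[
\left(\begin{array}{ccc} P\s PP\s & P\s AQ\s & 0 \\ 0 & Q\s QQ\s & Q\s B \\ CP\s & 0 & -D \end{array}\right),
\]
which is exactly of the shape required by Theorem \ref{rankequivalance} with pivot $P\s PP\s$, taking $B_1=\left(\begin{array}{cc} AQ\s & 0 \end{array}\right)$, $C_1=\left(\begin{array}{c} 0 \\ C \end{array}\right)$, $D_1=\left(\begin{array}{cc} Q\s QQ\s & Q\s B \\ 0 & -D \end{array}\right)$; one black-box application gives $rank(P)+rank\left(\begin{array}{cc} Q\s QQ\s & Q\s B \\ -CP\+ AQ\s & -D \end{array}\right)$, and a second application, now with pivot $Q\s QQ\s$, $C_2=-CP\+ A$, $D_2=-D$, gives $rank(Q)+rank(D-CP\+ AQ\+ B)$, which is the claim. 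The paper instead argues bottom-up: it writes $\left(\begin{array}{cc} A & AQ\+ B \\ CP\+ A & D \end{array}\right)$ as $\mathrm{diag}(A,D)+\mathrm{diag}(A,C)\,M\+\,\mathrm{diag}(A,B)$ with $M=\left(\begin{array}{cc} 0 & P \\ Q & 0 \end{array}\right)$, computes $M\+$ via Lemma \ref{adj_dag}, applies Theorem \ref{rankequivalance} once with the compound pivot $M\s MM\s$ to land on a $4\times 4$ block matrix, reduces that to the $3\times 3$ matrix plus a detached $-A$ block by explicit invertible block operations, and finally invokes Lemma \ref{prop} to identify the rank of the $2\times 2$ matrix with $rank(A)+rank(D-CP\+ AQ\+ B)$. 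Your route needs neither Lemma \ref{adj_dag} nor the $4\times 4$ bookkeeping, and no cancellation of $rank(A)$ ever arises; what the paper's antidiagonal embedding buys is treating both pivots in a single stroke, but here that is the longer path. One slip to correct in your write-up: $P\s$ and $Q\s$ are not invertible in general (they need not even be square), so ``sandwiching with the invertible blocks $P\s$, $Q\s$'' is not a valid justification; the sandwiches in the proof of Theorem \ref{rankequivalance} preserve rank because each can be undone by the opposite sandwich, not because the multipliers are invertible --- and if you cite that theorem as a black box, as above, the issue disappears entirely.
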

		
		\begin{proof}
		It is observed that
		\begin{eqnarray*}
		\left(\begin{array}{cc} A & AQ\+ B\\ CP\+ A & D \end{array}\right)&&= \left(\begin{array}{cc} A & 0 \\ 0 & D
		\end{array}\right)+ \left(\begin{array}{cc} A & 0 \\ 0 & C
		\end{array}\right)\left(\begin{array}{cc} 0 & Q\+ \\ P\+ & 0 \end{array}\right) \left(\begin{array}{cc} A & 0 \\ 0 & B
		\end{array}\right)\\ &&= \left(\begin{array}{cc} A & 0 \\ 0 & D
		\end{array}\right)+\left(\begin{array}{cc} A & 0 \\ 0 & C
		\end{array}\right)\left(\begin{array}{cc} 0 & P \\ Q & 0 \end{array}\right)\+ \left(\begin{array}{cc} A & 0 \\ 0 & B
		\end{array}\right) 
		\end{eqnarray*} (by Lemma \ref{adj_dag} (iii)).
		
		Thus by Theorem \ref{rankequivalance},
		\begin{eqnarray*}
		rank\left(\begin{array}{cc} A & AQ\+ B\\ CP\+ A & D \end{array}\right) &=&rank\left(\begin{array}{cc} M\s MM\s  & M\s    \left(\begin{array}{cc} A & 0 \\ 0 & B
		\end{array}\right)    \\ 
		\left(\begin{array}{cc} A & 0 \\ 0 & C
		\end{array}\right)  M\s     &    \left(\begin{array}{cc} -A & 0 \\ 0 & -D
		\end{array}\right)      \end{array}\right) -rank\left(\begin{array}{cc} 0 & P \\ Q & 0 \end{array}\right),
		\end{eqnarray*}
		where $M=\left(\begin{array}{cc} 0 & P \\ Q & 0 \end{array}\right).$
		By Lemma \ref{adj_dag} (ii),
		\begin{eqnarray*}
		rank\left(\begin{array}{cc} A & AQ\+ B\\ CP\+ A & D\end{array}\right)&=&rank\left(\begin{array}{cccc} 0 & Q\s QQ\s & 0 & Q\s B\\ P\s PP\s & 0&P\s A & 0\\ 0& AQ\s & -A &0\\CP\s & 0& 0&-D \end{array}\right)\\
		&&-rank(P)-rank(Q).
		\end{eqnarray*} 
		Also 
		\begin{eqnarray*}
		\left(\begin{array}{cccc} 0& I & P\s &0\\I & 0 & 0 & 0 \\ 0 & 0 & 0 & I \\ 0 & 0 & I & 0\end{array}\right) \left(\begin{array}{cccc} 0 & Q\s QQ\s & 0 & Q\s B\\ P\s PP\s & 0&P\s A & 0\\ 0& AQ\s & -A &0\\CP\s & 0& 0&-D \end{array}\right) \left(\begin{array}{cccc} 0 & I & 0 & 0 \\I & 0 & 0 & 0 \\ Q\s & 0 & 0 & I \\ 0 & 0 & I & 0\end{array}\right)\\=\left(\begin{array}{cccc} P\s AQ\s & P\s PP\s & 0 & 0\\ Q\s QQ\s & 0 & Q\s B & 0\\ 0 & CP\s & -D & 0\\ 0 & 0& 0&-A \end{array}\right).
		\end{eqnarray*} 
		Thus 
		\begin{eqnarray*}
		rank\left(\begin{array}{cc} A & AQ\+ B\\ CP\+A & D \end{array}\right)& =&rank \left(\begin{array}{cccc} P\s AQ\s & P\s PP\s & 0 & 0\\ Q\s QQ\s & 0 & Q\s B & 0\\ 0 & CP\s & -D & 0\\ 0 & 0& 0&-A \end{array}\right)\\
		& &-rank(P)-rank(Q)\\&=&rank\left(\begin{array}{ccc} P\s AQ\s & P\s PP\s & 0 \\ Q\s QQ\s & 0 & Q\s B \\ 0 & CP\s & -D   \end{array}\right)+rank(A)\\
		& &-rank(P)-rank(Q)
		.
		\end{eqnarray*} 
		But by Lemma \ref{prop},
		
		\begin{eqnarray*}
		rank\left(\begin{array}{cc} A & AQ\+ B\\ CP\+A & D \end{array}\right)&&=rank(A)+rank(D-CP\+AQ\+B).
		\end{eqnarray*} 
		Thus
		\begin{eqnarray*}
		rank(D-CP\+AQ\+B)=	rank\left(\begin{array}{ccc} P\s AQ\s & P\s PP\s & 0 \\ Q\s QQ\s & 0 & Q\s B \\ 0 & CP\s & -D  \end{array}\right)-rank(P)-rank(Q).
		\end{eqnarray*}
		\end{proof}
		
		\begin{cor}\label{cor13}
		Let $A\in \mathbb{C}^{m\times n}$  and $B\in \mathbb{C}^{n\times \ell}$ such that $A^{[\dag]}$ and $B^{[\dag]}$ exist. Then
		\begin{eqnarray*}
		rank(AB-ABB\+A\+AB)=	rank\left(\begin{array}{cc} B\s A\s & B\s B  \\ AA\s & AB \end{array}\right)+rank(AB)-rank(A)-rank(B).
		\end{eqnarray*}
		\end{cor}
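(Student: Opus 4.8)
The plan is to obtain the identity as a direct specialization of Theorem \ref{rankcondition}, followed by a rank reduction of the resulting block matrix.

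First I would apply Theorem \ref{rankcondition} with $P=B$ and $Q=A$ (both Moore--Penrose invertible by hypothesis), with its inner matrix equal to the identity $I$, and with its $B$, $C$, $D$ all replaced by $AB$. Since
\[
(AB)\,B\+\,I\,A\+\,(AB)=ABB\+A\+AB,
\]
the left side of Theorem \ref{rankcondition} becomes exactly $rank(AB-ABB\+A\+AB)$, while its right side becomes
\[
rank\begin{pmatrix} B\s A\s & B\s BB\s & 0 \\ A\s AA\s & 0 & A\s AB \\ 0 & ABB\s & -AB \end{pmatrix}-rank(B)-rank(A).
\]
Denote the displayed $3\times3$ block by $\mathcal{M}$. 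The terms $-rank(A)-rank(B)$ already match the target, so the problem reduces to proving $rank(\mathcal{M})=rank\begin{pmatrix} B\s A\s & B\s B \\ AA\s & AB \end{pmatrix}+rank(AB)$.

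Second, I would peel off the summand $rank(AB)$ by elementary block operations. Adding $A\s$ times the third block row to the second kills the $(2,3)$ entry $A\s AB$ and turns the $(2,2)$ entry into $A\s ABB\s$; then adding $B\s$ times the third block column to the second kills the $(3,2)$ entry $ABB\s$. These operations are carried out by invertible (unipotent) block matrices, hence preserve rank, and they bring $\mathcal{M}$ to the block-diagonal form $\mathrm{diag}(\mathcal{M}',\,-AB)$, where $\mathcal{M}'=\begin{pmatrix} B\s A\s & B\s BB\s \\ A\s AA\s & A\s ABB\s \end{pmatrix}$. Thus $rank(\mathcal{M})=rank(\mathcal{M}')+rank(AB)$, and it remains to show $rank(\mathcal{M}')=rank(\mathcal{N})$ with $\mathcal{N}:=\begin{pmatrix} B\s A\s & B\s B \\ AA\s & AB \end{pmatrix}$.

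Third, I would establish $rank(\mathcal{M}')=rank(\mathcal{N})$ by the two-sided multiplication technique already used for Theorem \ref{rankequivalance}, reading it off from the factorizations $\mathcal{M}'=\begin{pmatrix} B\s \\ A\s A \end{pmatrix}\begin{pmatrix} A\s & BB\s \end{pmatrix}$ and $\mathcal{N}=\begin{pmatrix} B\s \\ A \end{pmatrix}\begin{pmatrix} A\s & B \end{pmatrix}$. Concretely one checks
\[
\mathcal{M}'=\mathrm{diag}(I,A\s)\,\mathcal{N}\,\mathrm{diag}(I,B\s),\qquad \mathcal{N}=\mathrm{diag}(I,S)\,\mathcal{M}'\,\mathrm{diag}(I,U),
\]
where $S=A(A\s A)\+$ and $U=(BB\s)\+B$. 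The first relation is immediate; the second rests on $S\,A\s A=A$ and $BB\s\,U=B$, which follow from Theorem \ref{property}(iv) together with $R(B)=R(BB\s)$ (a consequence of the existence of $B\+$). Since each of $\mathcal{M}'$, $\mathcal{N}$ is obtained from the other by left and right multiplication, they have equal rank, completing the argument.

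I expect the last step to be the main obstacle. The elementary operations of the second step only split off $rank(AB)$ and leave the cubic factors $A\s AA\s$ and $B\s BB\s$ in place; collapsing them to $AA\s$ and $B\s B$ without altering the rank is precisely where the assumption that $A\+$ and $B\+$ exist is indispensable, since the identities $S\,A\s A=A$ and $BB\s\,U=B$ have no counterpart for matrices lacking a weighted Moore--Penrose inverse.
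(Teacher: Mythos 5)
Your proposal is correct and follows essentially the same route as the paper's proof: the identical specialization of Theorem \ref{rankcondition} (with $P=B$, $Q=A$, the inner matrix $I$, and $AB$ in the remaining slots), the same block row/column operations that split off $rank(AB)$, and the same two-sided multiplication argument identifying the rank of the remaining $2\times 2$ block with that of $\left(\begin{smallmatrix} B\s A\s & B\s B \\ AA\s & AB \end{smallmatrix}\right)$. The only cosmetic difference is that your multipliers $S=A(A\s A)\+$ and $U=(BB\s)\+B$ coincide with the paper's choices $(A\+)\s$ and $(B\+)\s$, by Theorem \ref{property} (ii) and (iv).
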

		\begin{proof}
		Replace $D$ by $AB$, $C$ by $AB$, $P$ by $B,$ $A$ by $I$, $Q$ by $A$ and $B$ by $AB$ in Theorem \ref{rankcondition}, we get
		\begin{eqnarray*}
		rank(AB-ABB\+A\+AB)
		&=&	rank\left(\begin{array}{ccc} B\s A\s & B\s BB\s & 0 \\ A\s AA\s& 0 & A\s AB\\ 0 & ABB\s &-AB \end{array}\right)\\
		& &-rank(A)-rank(B).
		\end{eqnarray*}
		Also,
		\begin{eqnarray*}
		\left(\begin{array}{ccc} I & 0 & 0 \\ 0& I & A\s \\ 0 & 0 & I \end{array}\right)\left(\begin{array}{ccc} B\s A\s & B\s BB\s & 0 \\ A\s AA\s & 0 & A\s AB\\ 0 & ABB\s &-AB \end{array}\right)\left(\begin{array}{ccc} I & 0 & 0 \\ 0& I & 0 \\ 0 & B\s & I \end{array}\right)\\=\left(\begin{array}{ccc} B\s A\s & B\s BB\s & 0 \\ A\s AA\s & A\s ABB\s & 0\\ 0 & 0 &-AB \end{array}\right)
		\end{eqnarray*}
		Therefore
		\begin{eqnarray*}
		rank(AB-ABB\+ A\+ AB)
		&=&rank\left(\begin{array}{cc} B\s A\s & B\s BB\s  \\ A\s AA\s & A\s ABB\s \end{array}\right)+rank(AB)\\
		& &-rank(A)-rank(B).
		\end{eqnarray*}
		Moreover, by observing the following facts
		\begin{eqnarray*}
		\left(\begin{array}{cc} I & 0 \\ 0 & {A\+}\s
		\end{array}\right)\left(\begin{array}{cc} B\s A\s & B\s BB\s  \\A\s AA\s & A\s ABB\s \end{array}\right) \left(\begin{array}{cc} I & 0 \\ 0 & {B\+} \s
		\end{array}\right)=\left(\begin{array}{cc} B\s A\s & B\s B \\ AA\s & AB \end{array}\right) 
		\end{eqnarray*}
		and
		\begin{eqnarray*}
		\left(\begin{array}{cc} I & 0 \\ 0 & A\s
		\end{array}\right)\left(\begin{array}{cc} B\s A\s & B\s B \\ AA\s & AB \end{array}\right) \left(\begin{array}{cc} I & 0 \\ 0 & B \s
		\end{array}\right)=\left(\begin{array}{cc} B\s A\s & B\s BB\s \\ A\s AA\s & A\s ABB\s \end{array}\right), 
		\end{eqnarray*}
		we have \begin{eqnarray*}
		rank\left(\begin{array}{cc} B\s A\s & B\s B \\ AA\s & AB \end{array}\right) =rank\left(\begin{array}{cc} B\s A\s & B\s BB\s \\ A\s AA\s & A\s ABB\s \end{array}\right). 
		\end{eqnarray*}
		Thus
		\begin{eqnarray*}
		rank(AB-ABB\+A\+AB)=	rank\left(\begin{array}{cc} B\s A\s & B\s B  \\ AA\s & AB \end{array}\right)+rank(AB)-rank(A)-rank(B).
		\end{eqnarray*}
		\end{proof}
		
		\begin{lem}\label{hermitian}
		
		Let $P$ and $Q$  be two N-Hermitian idempotent matrices of suitable orders. Then
		\begin{eqnarray*}
		rank(PQ-QP)=2\; rank\left(\begin{array}{cc} P  & Q
		\end{array}\right)+2\; rank(PQ)-2\; rank(P)-2\; rank(Q).	
		\end{eqnarray*}
		
		\end{lem}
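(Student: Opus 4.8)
The plan is to reduce this to the general idempotent rank identity already recorded in Lemma \ref{rankpq}. The first thing to note is that $N$-Hermitian idempotents are genuine idempotents, since the relation $P^2=P$ makes no reference to the inner product; consequently Lemma \ref{rankpq} applies to $P$ and $Q$ unchanged and yields
\begin{eqnarray*}
rank(PQ-QP) &=& rank\left(\begin{array}{c}P\\Q\end{array}\right)+rank\left(\begin{array}{cc}P & Q\end{array}\right)+rank(PQ)\\
&& +\,rank(QP)-2\,rank(P)-2\,rank(Q).
\end{eqnarray*}
Comparing this with the target formula, it suffices to establish the two collapses $rank\left(\begin{array}{c}P\\Q\end{array}\right)=rank\left(\begin{array}{cc}P & Q\end{array}\right)$ and $rank(QP)=rank(PQ)$, each of which comes directly from the $N$-Hermitian hypothesis $P\s=P$, $Q\s=Q$.

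For the first collapse I would apply Lemma \ref{rankrowab}(ii) with $A=P$ and $C=Q$, giving $rank\left(\begin{array}{c}P\\Q\end{array}\right)=rank\left(\begin{array}{cc}P\s & Q\s\end{array}\right)$; replacing $P\s$ by $P$ and $Q\s$ by $Q$ turns the right-hand side into $rank\left(\begin{array}{cc}P & Q\end{array}\right)$, as required.

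For the second collapse the key observation is that the $MN$-adjoint preserves rank: because $X\s=N^{-1}X^*M$ with the weights $M,N$ invertible, one has $rank(X\s)=rank(X^*)=rank(X)$ for every $X$. Taking $X=PQ$ and using $(PQ)\s=Q\s P\s=QP$ (again by $N$-Hermiticity), this yields $rank(QP)=rank\big((PQ)\s\big)=rank(PQ)$. Substituting the two collapses into the displayed six-term expression immediately produces $2\,rank\left(\begin{array}{cc}P & Q\end{array}\right)+2\,rank(PQ)-2\,rank(P)-2\,rank(Q)$, which is the assertion.

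I do not expect a genuine obstacle in this argument, as the whole proof is a specialization of Lemma \ref{rankpq}; the only steps needing care are the verification that $N$-Hermitian idempotents remain idempotent so that Lemma \ref{rankpq} may be invoked, and the rank-invariance of the adjoint, which rests solely on the invertibility of the weights.
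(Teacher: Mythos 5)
Your proposal is correct and takes essentially the same route as the paper: both invoke Lemma \ref{rankpq} for general idempotents, then collapse $rank\left(\begin{array}{c}P\\Q\end{array}\right)$ to $rank\left(\begin{array}{cc}P & Q\end{array}\right)$ via Lemma \ref{rankrowab} and $rank(QP)$ to $rank(PQ)$ via rank-invariance of the $MN$-adjoint, using $P\s=P$ and $Q\s=Q$. The only cosmetic difference is that the paper first rewrites $rank(QP)$ as $rank(P\s Q\s)$ and then applies Hermiticity, whereas you apply Hermiticity inside the adjoint directly; the content is identical.
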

		\begin{proof}
		Since $P$ and $Q$ are two idempotent matrices, by Lemma \ref{rankpq},
		\begin{eqnarray*}
		rank(PQ-QP)&=&rank\left(\begin{array}{cc} P \\ Q
		\end{array}\right)+rank\left(\begin{array}{cc} P  & Q
		\end{array}\right)+rank(PQ)\\
		&&+rank(QP)-2\; rank(P)-2\; rank(Q).	
		\end{eqnarray*}
		By Lemma \ref{rankrowab},
		\begin{eqnarray*}
		rank(PQ-QP)&=&rank\left(\begin{array}{cc} P\s  & Q\s
		\end{array}\right)+rank\left(\begin{array}{cc} P  & Q
		\end{array}\right)+rank(PQ)\\
		&&+rank(P\s Q\s)-2\; rank(P)-2\; rank(Q).
		\end{eqnarray*}	
		Since $P$ and $Q$ are Hermitian we get,
		\begin{eqnarray*}
		rank(PQ-QP)=2\; rank\left(\begin{array}{cc} P  & Q
		\end{array}\right)+2\; rank(PQ)-2\; rank(P)-2\; rank(Q).	
		\end{eqnarray*}
				\end{proof}
		
			\begin{lem}\label{rankblock}
			If $A^{[\dag]}$ and $B^{[\dag]}$ exist, then	$rank\left(\begin{array}{cc} BB\+ & A\+A 
			\end{array}\right) =rank\left(\begin{array}{cc} B & A\s
			\end{array}\right).$
		\end{lem}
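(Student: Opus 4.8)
The plan is to reduce both ranks to the dimension of a single subspace, namely $R(B)+R(A\s)$, by identifying the column spaces of the two blocks appearing on the left. Note first that all four blocks have $n$ rows: with $A\in\mathbb{C}^{m\times n}$ and $B\in\mathbb{C}^{n\times\ell}$ we have $A\s\in\mathbb{C}^{n\times m}$, $BB\+\in\mathbb{C}^{n\times n}$ and $A\+A\in\mathbb{C}^{n\times n}$, so both row-partitioned matrices are legitimately formed and have $n$ rows. The key elementary fact is that for a horizontal concatenation $\left(\begin{array}{cc} X & Y\end{array}\right)$ the column space is $R(X)+R(Y)$, so its rank equals $\dim\bigl(R(X)+R(Y)\bigr)$. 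Hence it suffices to prove the two range identities $R(BB\+)=R(B)$ and $R(A\+A)=R(A\s)$.

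First I would establish $R(BB\+)=R(B)$. The inclusion $R(BB\+)\subseteq R(B)$ is immediate, while equation \eqref{eqn1} (applied with $X=B\+$) gives $B=BB\+B$, so that $R(B)=R(BB\+B)\subseteq R(BB\+)$; combining the two inclusions yields the equality. This step is purely algebraic and uses only the first Moore--Penrose equation, so it transfers to the indefinite setting without difficulty.

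Next I would prove $R(A\+A)=R(A\s)$, which is the place requiring genuine care, since in an indefinite metric one cannot appeal to the usual orthogonal-projection intuition and must instead invoke the correct identities from Theorem \ref{property}. For $R(A\s)\subseteq R(A\+A)$, I would use Theorem \ref{property}(i) in the form $A\s=A\+AA\s=(A\+A)A\s$, which exhibits every column of $A\s$ as lying in $R(A\+A)$. For the reverse inclusion, $R(A\+A)\subseteq R(A\+)$ is trivial, and Theorem \ref{property}(iv) gives $A\+=A\s(AA\s)\+$, whence $R(A\+)\subseteq R(A\s)$; chaining these yields $R(A\+A)\subseteq R(A\s)$. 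Equality follows.

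Finally I would assemble the pieces: using the concatenation fact together with the two range identities,
\begin{eqnarray*}
rank\left(\begin{array}{cc} BB\+ & A\+A\end{array}\right)
&=&\dim\bigl(R(BB\+)+R(A\+A)\bigr)\\
&=&\dim\bigl(R(B)+R(A\s)\bigr)
=rank\left(\begin{array}{cc} B & A\s\end{array}\right),
\end{eqnarray*}
which is the desired conclusion. I do not anticipate a serious obstacle here; the only point demanding attention is selecting the correct components of Theorem \ref{property} to justify $R(A\+A)=R(A\s)$, rather than silently importing Euclidean orthogonal-complement reasoning that is invalid under an indefinite weight. An alternative, slightly longer route would be to invoke Lemma \ref{rankrowab} to pass between the row block and a column block of adjoints, but the direct range computation above is cleaner and self-contained.
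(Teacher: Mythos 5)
Your proof is correct and takes essentially the same approach as the paper: the paper's entire proof consists of the two factorizations $\left(\begin{array}{cc} BB\+ & A\+A \end{array}\right)\left(\begin{array}{cc} B & 0 \\ 0 & A\s \end{array}\right)=\left(\begin{array}{cc} B & A\s \end{array}\right)$ and $\left(\begin{array}{cc} B & A\s \end{array}\right)\left(\begin{array}{cc} B\+ & 0 \\ 0 & (A\+)\s \end{array}\right)=\left(\begin{array}{cc} BB\+ & A\+A \end{array}\right)$, which encode exactly your mutual column-space inclusions $R(B)\subseteq R(BB\+)$, $R(A\s)\subseteq R(A\+A)$ and their reverses. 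The only cosmetic difference is the justification of $R(A\+A)\subseteq R(A\s)$: the paper reads it off from $A\s(A\+)\s=(A\+A)\s=A\+A$ (Moore--Penrose equation \eqref{eqn4}), whereas you route it through Theorem \ref{property}(iv); both are valid.
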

		\begin{proof}
			The	conclusion	may	be	arrived	easily	by	using	the	following	two	equations\\
			\begin{eqnarray*}
				\left(\begin{array}{cc} BB\+ & A\+A 
				\end{array}\right)\left(\begin{array}{cc} B & 0 \\0&A\s
				\end{array}\right)&=\left(\begin{array}{cc} B & A\s
				\end{array}\right)	\text{and}\\	\left(\begin{array}{cc} B & A\s
				\end{array}\right)\left(\begin{array}{cc} B\+ & 0 \\0&(A\+)\s
				\end{array}\right)&=\left(\begin{array}{cc} BB\+ & A\+A 
				\end{array}\right).
			\end{eqnarray*}
		
		\end{proof}
			
		\begin{thm}\label{thm15}
		Let $A\in \mathbb{C}^{m\times n}$  and $B\in \mathbb{C}^{n\times \ell}$. If $A^{[\dag]}$ and $B^{[\dag]}$ exist, then $$rank(BB\+A\+A-A\+ABB\+)=2\; rank\left(\begin{array}{cc} A\s  & B
		\end{array}\right)+2\; rank(AB)-2\; rank(A)-2\; rank(B).$$
		\end{thm}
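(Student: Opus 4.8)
The plan is to apply Lemma~\ref{hermitian} to the pair of idempotents $P=BB^{[\dag]}$ and $Q=A^{[\dag]}A$ and then to reduce each of the four terms on the right-hand side to the quantities appearing in the statement. First I would check that $P$ and $Q$ are $N$-Hermitian idempotent matrices of order $n$: idempotency is $BB^{[\dag]}BB^{[\dag]}=BB^{[\dag]}$ and $A^{[\dag]}AA^{[\dag]}A=A^{[\dag]}A$, which come from \eqref{eqn1} and \eqref{eqn2}, while the $N$-Hermitian property of $P$ and $Q$ is precisely \eqref{eqn3} applied to $B$ and \eqref{eqn4} applied to $A$. Since $PQ-QP=BB^{[\dag]}A^{[\dag]}A-A^{[\dag]}ABB^{[\dag]}$, Lemma~\ref{hermitian} yields
$$rank(BB^{[\dag]}A^{[\dag]}A-A^{[\dag]}ABB^{[\dag]})=2\,rank\left(\begin{array}{cc} BB^{[\dag]} & A^{[\dag]}A\end{array}\right)+2\,rank(BB^{[\dag]}A^{[\dag]}A)-2\,rank(BB^{[\dag]})-2\,rank(A^{[\dag]}A).$$

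Three of these terms reduce immediately. From $B=BB^{[\dag]}B$ and $A=AA^{[\dag]}A$ one gets $rank(BB^{[\dag]})=rank(B)$ and $rank(A^{[\dag]}A)=rank(A)$, and Lemma~\ref{rankblock} gives $rank\left(\begin{array}{cc} BB^{[\dag]} & A^{[\dag]}A\end{array}\right)=rank\left(\begin{array}{cc} B & A^{[*]}\end{array}\right)=rank\left(\begin{array}{cc} A^{[*]} & B\end{array}\right)$, the last equality being a permutation of columns. Thus everything hinges on the single rank identity $rank(BB^{[\dag]}A^{[\dag]}A)=rank(AB)$, which I expect to be the main obstacle.

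To establish it I would pass through the matrix $A^{[\dag]}ABB^{[\dag]}$. Taking $N$-adjoints and using that $P$ and $Q$ are Hermitian gives $(BB^{[\dag]}A^{[\dag]}A)^{[*]}=A^{[\dag]}ABB^{[\dag]}$; since the adjoint $N^{-1}(\cdot)^{*}N$ preserves rank, $rank(BB^{[\dag]}A^{[\dag]}A)=rank(A^{[\dag]}ABB^{[\dag]})$. For the latter, the factorization $A\big(A^{[\dag]}ABB^{[\dag]}\big)B=AB$ (obtained from $AA^{[\dag]}A=A$ and $BB^{[\dag]}B=B$) shows $rank(AB)\le rank(A^{[\dag]}ABB^{[\dag]})$, while $rank(A^{[\dag]}ABB^{[\dag]})\le rank(A^{[\dag]}AB)=rank(AB)$, the middle inequality because dropping $B^{[\dag]}$ cannot raise the rank and the equality because $A(A^{[\dag]}AB)=AB$ together with $A^{[\dag]}AB=A^{[\dag]}(AB)$. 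Hence $rank(BB^{[\dag]}A^{[\dag]}A)=rank(AB)$, and substituting the four simplified terms into the displayed formula gives the claimed identity. The only genuinely delicate point is this rank equality; the remaining manipulations are direct consequences of Theorem~\ref{property} and the defining equations \eqref{eqn1}--\eqref{eqn4}.
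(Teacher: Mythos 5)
Correct, and essentially the paper's own argument: you apply Lemma~\ref{hermitian} to the same pair of $N$-Hermitian idempotents $P=BB^{[\dag]}$, $Q=A^{[\dag]}A$, reduce the block term via Lemma~\ref{rankblock}, and isolate the same key identity $rank(BB^{[\dag]}A^{[\dag]}A)=rank(AB)$. The only difference is the micro-step verifying that identity: the paper works with $BB^{[\dag]}A^{[\dag]}A$ directly, bounding its rank above by writing $BB^{[\dag]}A^{[\dag]}A=B(B^{[*]}B)^{[\dag]}B^{[*]}A^{[*]}(AA^{[*]})^{[\dag]}A$ (Theorem~\ref{property}(iv)) and below via $B^{[*]}A^{[*]}=B^{[*]}BB^{[\dag]}A^{[\dag]}AA^{[*]}$ (Theorem~\ref{property}(i)), whereas you pass to the adjoint $A^{[\dag]}ABB^{[\dag]}$ and sandwich it between $A$ and $B$ --- an equally valid variation.
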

		\begin{proof}
		Clearly $A\+ A$ and $BB\+$ are Hermitian and idempotent, then by Lemma \ref{hermitian},
		\begin{eqnarray*}
		rank(BB\+ A\+ A-A\+ ABB\+)
		& &=  2\; rank\left(\begin{array}{cc} BB\+  & A\+ A
		\end{array}\right)+2\; rank(BB\+ A\+ A)\\
		& &-2\; rank(BB\+)-2\; rank(A\+ A)\\
		& &=2\; rank\left(\begin{array}{cc} B  & A\s
		\end{array}\right)+2\; rank(AB)\\
		& &-2\; rank(B)-2\; rank(A) \ \ (\text{by  Lemma  \ref{rankblock}}).
		\end{eqnarray*}

		Also, 
		\begin{eqnarray*}
		rank(BB\+ A\+A)& &=rank(B(B\s B)\+ B\s A\s (AA\s)\+ A)\\& &\le rank(B\s A\s)
		=rank(AB)\\ \text{ and } rank(AB)
		=rank(B\s A\s)
		& &=rank(B\s BB\+ A\+ AA\s )\\	& & \le rank(BB\+ A\+A).\end{eqnarray*}
		Thus $rank(AB)= rank(BB\+ A\+A).$
		
		\end{proof}

		\begin{thm}\label{rankequlity}
		Let $A\in \mathbb{C}^{m\times n}$  and $B\in \mathbb{C}^{n\times \ell}$ such that $A^{[\dag]}$ and $B^{[\dag]}$ exist. If
		\begin{eqnarray*}
		rank\left(\begin{array}{cc} B\s A\s & B\s B  \\ AA\s & AB \end{array}\right)=	rank\left(\begin{array}{cc}  A\s & B \end{array}\right),
		\end{eqnarray*}
		then $(AB)\+ =B\+A\+$ is equivalent to any one of the conditions given in Theorem \ref{reverseequalcondn}.
		\end{thm}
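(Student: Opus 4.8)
The plan is to prove the stated equivalence as a two-way implication, regarding the reverse order law $(AB)\+ = B\+ A\+$ as one further entry to be inserted into the list of Theorem \ref{reverseequalcondn}. One direction is already in hand: by the Corollary following Theorem \ref{reverseequalcondn3}, each of the conditions (i)--(iv) of Theorem \ref{reverseequalcondn} forces $(AB)\+$ to exist and to coincide with $B\+ A\+$, and this costs nothing from the rank hypothesis. So the entire burden falls on the forward implication: assuming $(AB)\+ = B\+ A\+$, I would recover condition (iv) of Theorem \ref{reverseequalcondn}, after which that theorem supplies all the others.

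The first step converts the standing rank hypothesis into a commutation relation. Write $r_1 = rank\left(\begin{array}{cc} B\s A\s & B\s B \\ AA\s & AB \end{array}\right)$ and $r_2 = rank\left(\begin{array}{cc} A\s & B\end{array}\right)$, so that the hypothesis reads $r_1 = r_2$. From $(AB)\+ = B\+ A\+$ the first Penrose equation gives $ABB\+ A\+ AB = AB$, hence $rank(AB - ABB\+ A\+ AB) = 0$; Corollary \ref{cor13} then forces $r_1 = rank(A) + rank(B) - rank(AB)$. Using $r_1 = r_2$, the same value is taken by $r_2$, and substituting it into Theorem \ref{thm15} collapses its right-hand side to zero. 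Therefore $BB\+ A\+ A = A\+ A BB\+$: the $N$-Hermitian idempotents $A\+ A$ and $BB\+$ commute.

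The second step upgrades this commutation of projections to condition (iv) by bringing in the remaining Penrose equations. Since $(AB)\+ = B\+ A\+$, the idempotent $ABB\+ A\+ = (AB)(AB)\+$ is $N$-Hermitian; computing $(ABB\+ A\+)\s$ and using that $BB\+$ is $N$-Hermitian gives $ABB\+ A\+ = (A\+)\s BB\+ A\s$. Left-multiplying by $A\s$ and invoking $A\s (A\+)\s = (A\+ A)\s = A\+ A$ yields $A\s A BB\+ A\+ = A\+ A BB\+ A\s$; the commutation relation together with $A\+ A A\s = A\s$ (property (i) of Theorem \ref{property}) simplifies this to $A\s A BB\+ A\+ = BB\+ A\s$. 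Post-multiplying by $A$ and applying the commutation relation once more produces the clean identity $A\s A BB\+ = BB\+ A\s A$, i.e. the non-idempotent $A\s A$ now commutes with $BB\+$. Right-multiplying by $B$ gives $BB\+ A\s AB = A\s AB$, the first half of condition (iv); the second half, $A\+ A BB\s A\s = BB\s A\s$, follows symmetrically from the $N$-Hermitian idempotent $B\+ A\+ AB = (AB)\+(AB)$. An appeal to Theorem \ref{reverseequalcondn} then delivers every condition on the list.

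The \emph{delicate point} is the second step, and it is precisely where the rank hypothesis earns its keep. Commutation of the projections $A\+ A$ and $BB\+$ is, on its own, strictly weaker than the reverse order law and does not by itself yield the range inclusions of Theorem \ref{reverseequalcondn}; one can arrange $A\+ A$ and $BB\+$ to commute while $R(A\s AB) \not\subseteq R(B)$. What closes the gap is the interaction of this commutation with the $N$-Hermiticity of the product projections $ABB\+ A\+$ and $B\+ A\+ AB$, which is what promotes the commuting of the idempotents to the commuting of $A\s A$ with $BB\+$. The hypothesis $r_1 = r_2$ is used only to secure the commutation in the first place: once the first Penrose equation holds, Corollary \ref{cor13} and Theorem \ref{thm15} give $rank(BB\+ A\+ A - A\+ A BB\+) = 2(r_2 - r_1)$, so $r_1 = r_2$ is exactly the condition that makes these two projections commute.
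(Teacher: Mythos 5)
Your proposal is correct and follows essentially the same route as the paper: the backward direction is delegated to the corollary of Theorem \ref{reverseequalcondn3}, and the forward direction combines Corollary \ref{cor13}, Theorem \ref{thm15} and the rank hypothesis to obtain the commutation $BB\+A\+A=A\+ABB\+$, from which condition (iv) of Theorem \ref{reverseequalcondn} is recovered algebraically. Your only deviation is cosmetic, in that last algebraic step: you pass through the Hermiticity of $ABB\+A\+$ and the intermediate identity $A\s ABB\+=BB\+A\s A$, whereas the paper applies property (i) of Theorem \ref{property} to $AB$ (with inverse $B\+A\+$) to write $BB\s A\s = BB\+A\+AB\,B\s A\s$ and then substitutes the commutation directly.
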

		\begin{proof}Since
		\begin{eqnarray*}
		rank\left(\begin{array}{cc} B\s A\s & B\s B  \\ AA\s & AB \end{array}\right)=	rank\left(\begin{array}{cc}  A\s & B \end{array}\right),
		\end{eqnarray*}
		by Theorem \ref{thm15} and Corollary \ref{cor13},  
		$$2\; rank(AB-ABB\+A\+AB)=rank(BB\+A\+A-A\+ABB\+).$$
		Thus if $(AB)\+ =B\+A\+$, then $BB\+A\+A=A\+ABB\+$. Therefore
		$$BB\s A\s =BB\+A\+ABB\s B\s A\s =A\+ A BB\+BB\s A\s =A\+ABB\s A\s.$$
		Similarly, we prove $BB\+A\s AB =A\s AB$. 
		Thus we obtain condition (iv) of Theorem \ref{reverseequalcondn}.
		
		\end{proof}
		\section{An Open problem}
		We can observe from  Theorem \ref{rankequlity} that $(AB)\+ =B\+A\+$ is equivalent to any one of the conditions given in  Theorem \ref{reverseequalcondn} by assuming the rank equality $$	rank\left(\begin{array}{cc} B\s A\s & B\s B  \\ AA\s & AB \end{array}\right)=	rank\left(\begin{array}{cc}  A\s & B \end{array}\right).$$
		But in the Euclidean case such a rank equality assumption is not required. Thus it is an open question for giving proof for Theorem \ref{rankequlity} without assuming the rank equality, or finding a counter example.

		\end{document}